\newtheorem{theorem}{Theorem}[section]
\newtheorem{corollary}[theorem]{Corollary}
\newtheorem{lemma}[theorem]{Lemma}
\newtheorem{proposition}[theorem]{Proposition}
\theoremstyle{definition}
\newtheorem{definition}[theorem]{Definition}
\theoremstyle{remark}
\newtheorem{remark}[theorem]{Remark}
\newcommand{\cQ}{\mathcal{Q}}
\newcommand{\cK}{\mathcal{K}}
\DeclareMathOperator{\cZ}{\mathcal{Z}}
\DeclareMathOperator{\id}{id}
\DeclareMathOperator{\Ad}{Ad}
\DeclareMathOperator{\Aut}{Aut}
\DeclareMathOperator{\Inn}{Inn}
\DeclareMathOperator{\Out}{Out}
\DeclareMathOperator{\fin}{fin}
\DeclareMathOperator{\SL}{SL}
\DeclareMathOperator{\actson}{\curvearrowright}
\DeclareMathOperator{\cS}{\mathcal{S}}
\DeclareMathOperator{\cU}{\mathcal{U}}
\DeclareMathOperator{\cR}{\mathcal{R}}
\DeclareMathOperator{\cC}{\mathcal{C}}
\DeclareMathOperator{\cD}{\mathcal{D}}
\DeclareMathOperator{\PSL}{PSL}
\DeclareMathOperator{\GL}{GL}
\DeclareMathOperator{\spec}{Spec}
\DeclareMathOperator{\dotdot}{\cdot\cdot\cdot}
\title{A TYPE $\textrm{III}_{1}$ FACTOR WITH THE SMALLEST OUTER AUTOMORPHISM GROUP} 
\author{Soham Chakraborty}
\address{Department of Mathematics, KU Leuven \\
02.32, Celestijnenlaan 200B, Leuven 3001, Belgium}
\subjclass[2000]{46L40, 36A40, 36A20}
\keywords{outer automorphisms, type III factors, amalgamated free products, cocycle superrigidity}
\email{soham.chakraborty@kuleuven.be}
\begin{document}
\setlength{\parindent}{0em}
\setlength{\parskip  }{5.5 pt}

\begin{abstract}
    The canonical modular homomorphism provides an embedding of $\mathbb{R}$ into the outer automorphism group $\Out(M)$ of any type $\rm{III}_{1}$ factor $M$. We provide an explicit construction of a full factor of type $\rm{III}_{1}$ with separable predual such that the outer automorphism group is minimal, i.e. this embedding is an isomorphism and a homeomorphism. We obtain such a $\rm{III}_{1}$ factor by using an amalgamated free product construction.  
\end{abstract}

\maketitle

\section{Introduction}

In this article we will only deal with von Neumann algebras with separable preduals. An important invariant for the study of $\rm{II}_{1}$ factors is the outer automorphism group of the factor. Recall that an automorphism of such a factor $M$ is called inner if it is of the form $\Ad(u)$ for a unitary $u \in M$. The outer automorphism group $\Out(M)$ is defined as the quotient of the group of all automorphisms of $M$ by the group of inner automorphisms. In \cite{MR0103421}, Blattner showed that $\Out(R)$ for the hyperfinite $\rm{II}_{1}$ factor $R$ is huge: it contains an isomorphic copy of every locally compact second countable group. In general, computation of the outer automorphism group for a $\rm{II}_{1}$ factor is a difficult problem. The first significant step in this direction was taken in \cite{MR0587372} where Connes introduced the notion of property (T) von Neumann algebras and showed that any property (T) factor has countable outer automorphism group. In particular he showed that group von Neumann algebras $L(G)$ of property (T) groups $G$ with infinite conjugacy classes (ICC) have countable outer automorphism groups. An interesting and natural question then was if there exists a $\rm{II}_{1}$ factor $M$ with trivial outer automorphism group. This was a difficult question and remained unanswered for many years until Popa's deformation and rigidity theory made it possible to attack such problems. 

In their seminal article \cite{MR2386109}, Ioana, Peterson and Popa showed that any second countable compact abelian group appears as the outer automorphism group of a $\rm{II}_{1}$ factor. In \cite{MR2409162}, Falgui\`eres and Vaes improved this result and showed that in fact every compact group (not necessarily abelian) arises as outer automorphism group of a $\rm{II}_{1}$ factor. However all these results used a Baire category argument which showed the existence of such $\rm{II}_{1}$ factors, but did not provide explicit constructions. In \cite{MR2601038}, Popa and Vaes gave explicit constructions of certain $\rm{II}_{1}$ factors with prescribed outer automorphism groups, including the first explicit example of a $\rm{II}_{1}$ factor with trivial outer automorphism group. It is still unknown if every locally compact unimodular group arises as the outer automorphim group of a $\rm{II}_{1}$ factor. However by now we have a large class of examples of such groups: in \cite{MR2504433}, Vaes showed that every discrete countable group arises as an outer automorphism group, we earlier saw that every compact group arises this way too. In \cite{deprez2012explicit}, Deprez showed that the groups $\SL^{\pm}(n,\mathbb{R})$ also arise as outer automorphism groups of $\rm{II}_{1}$ factors.

This article deals with the type $\rm{III}$ version of this problem. If $M$ is a type $\rm{III}$ factor with a faithful normal state $\phi$, then the modular automorphism group $(\sigma^{\phi}_{t})_{t \in \mathbb{R}}$ form a natural class of automorphisms of $M$. A deep result of Connes showed that the modular automorphism group does not depend on the choice of the faithful normal state up to inner automorphisms. If furthermore $M$ is of type $\rm{III}_{1}$, then the modular automorphisms are outer, and hence we have that $\mathbb{R} \subseteq \Out(M)$. We are interested in the question: can we explicitly construct a factor $M$ of type $\rm{III}_{1}$ such that $\Out(M) = \mathbb{R}$. We tackle this problem with ergodic theoretic methods: by using group measure space von Neumann algebras, where the outer automorphism group has a more tractable description in terms of the outer automorphism group of the associated equivalence relation, as below. 

Suppose that we have a free ergodic nonsingular action $\Gamma \curvearrowright (Z_{1},\eta_{1})$ for a discrete countable group $\Gamma$ and let the associated countable ergodic equivalence relation on $(Z_{1},\eta_{1})$ be $\mathcal{R}_{1}$ (the subscript 1 is used intentionally as we deal with actions of type $\rm{III}_{1}$). Recall that an automorphism of $\mathcal{R}_{1}$ is a nonsingular Borel isomorphism $\theta: Z_{1} \rightarrow Z_{1}$ such that for a.e. $z,z' \in Z_{1}$, $(z,z') \in \mathcal{R}_{1}$ if and only if $(\theta(z), \theta (z')) \in \mathcal{R}_{1}$. Such an automorphism is called inner if $(z, \theta(z)) \in \mathcal{R}_{1}$ for a.e. $z \in Z_{1}$. The outer automorphism group $\Out(\mathcal{R}_{1})$ is defined similarly as the quotient of the automorphism group by the inner ones. By the theory of Feldman and Moore (\cite{MR578656}, \cite{MR578730}), denoting by $M_{1}$ the von Neumann algebra $L^{\infty}(Z_{1},\eta_{1}) \rtimes \Gamma$ and by $A$ the Cartan subalgebra $L^{\infty}(Z_{1},\eta_{1})$, we have that: \begin{equation}
\label{intro eq 1}
    \Out(A \subset M_{1}) = \Out(\mathcal{R}_{1}) \ltimes H^{1}(\mathcal{R}_{1},\mathbb{T})
\end{equation}
where $\Out(A \subset M_{1})$ denotes the group of outer automorphisms preserving the Cartan subalgebra $A$ and $H^{1}(\mathcal{R}_{1},\mathbb{T})$ denotes the quotient of the 1-cocycle group of $\mathcal{R}_{1}$ with values in the circle, by the group of coboundaries. In particular if $A \subset M_{1}$ is the unique Cartan subalgebra up to unitary conjugacy, then equation \ref{intro eq 1} indeed becomes: 
\begin{equation}
    \Out(M_{1}) = \Out(\mathcal{R}_{1}) \ltimes H^{1}(\mathcal{R}_{1},\mathbb{T})
\end{equation}
The modular automorphism group sits inside the 1-cohomolgy group $H^{1}(\mathcal{R}_{1},\mathbb{T})$. Therefore the idea is to construct a `rigid' free ergodic action $\Gamma \curvearrowright (Z_{1},\eta_{1})$ of type $\rm{III}_{1}$ such that every orbit equivalence is inner, the 1-cohomology group with values in the circle is equal to $\mathbb{R}$ and the associated von Neumann algebra has a unique Cartan subalgebra up to unitary conjugacy. In \cite{MR3335839}, Ioana showed that for a large class of amalgamated free product groups, any free ergodic pmp action has the property that the crossed product has a unique Cartan subalgebra. Notice that if $\Gamma = \Gamma_{1} \ast_{\Sigma} \Gamma_{2}$ is an amalgamted free product group and $\Gamma \curvearrowright (Z, \eta)$ is a nonsingular free ergodic action which is induced from an action $\Gamma_{i} \curvearrowright (X, \mu)$, then $L^{\infty}(Z) \rtimes \Gamma$ is isomporphic to $B(\mathcal{H}) \overline{\otimes} (L^{\infty}(X) \rtimes \Gamma_{i})$, and in such a situation we cannot have uniqueness of Cartan subalgebras. However this can be taken care of by imposing some recurrence of $\Gamma_{i} \curvearrowright (Z, \eta)$ relative to $\Sigma$. Using the methods of \cite{MR3102166}, Vaes in \cite[Theorem 8.1]{Vaes14} proved that under some suitable recurrence assumptions, any non singular free ergodic action has the property that the crossed product has a unique Cartan subalgebra, generalizing the unique Cartan decomposition results in \cite{MR3164361}. We use a slightly modified version of this result that we state in Theorem \ref{Thm: on uniqueness of Cartan from Vaes}, which was already observed in \cite[Remark 8.3]{Vaes14}. 

In this article we construct such an action of an amalgamated free product group that is sufficiently rigid. Orbit equivalence (OE)-superrigidity results for an action usually follow from cocycle superrigidity with countable target groups. Over the years a lot of these results have been proven for example for Bernoulli actions of property (T) groups in \cite{MR2342637}, of product groups in \cite{MR2425177} and for profinite actions of property (T) groups in \cite{MR2783933}. In \cite{PopaVaes11} the authors use similar methods to prove cocycle and OE-superrigidity results for actions of lattices in $\SL(n,\mathbb{R})$ on $\mathbb{R}^{n}$. Since these actions are not probability measure preserving any more, the notion of property (T) has to be replaced by Zimmer's notion of property (T) for non singular actions, and this plays a very important role in \cite{PopaVaes11}. Using such an action of a lattice and the unique Cartan results, Vaes in \cite[Proposition D]{Vaes14} constructed the first example of a type III action which is W*-superrigid. In \cite[Remark 6.10]{MR4664830}, the authors used this construction to give examples of type $\rm{III}_{0}$ actions that are W*-superrigid with any prescribed flow. Our construction in this article is similar to the one in \cite[Proposition D]{Vaes14} but the main difficulty is that we need to make suitable modifications to take care of $\Out(\mathcal{R}_{1})$ and the 1-cohomology group $H^{1}(\mathcal{R}_{1},\mathbb{T})$.  

Now we explain our construction. We take two countable discrete groups  $\Gamma_{1}$ and $\Lambda$ and let $\Sigma < \Gamma_{1}$ be a proper subgroup. We consider the amalgamated free product $\Gamma = \Gamma_{1} *_{\Sigma} (\Sigma \times \Lambda)$ with the natural quotient morphism $\pi: \Gamma \rightarrow \Gamma_{1}$. We will consider two actions: a free ergodic probability measure preserving action of $\Gamma \curvearrowright (X,\mu)$ and a free ergodic nonsingular action $\Gamma_{1} \curvearrowright (Y_{1},\nu)$ of type $\textrm{III}_{1}$ such that the infinite measure preserving Maharam extension $\Gamma_{1} \curvearrowright (Y_{1} \times \mathbb{R}, \nu \times \lambda)$ is ergodic (here $\lambda$ is the Lebesgue measure on $\mathbb{R}$). Putting $(Z_{1}, \eta_{1}) = (X \times Y_{1}, \mu \times \nu)$, we show that the diagonal action $\Gamma\curvearrowright Z_{1}$ given by $g \cdot (x,y) = (gx, \pi(g) \cdot y)$ satisfies the required properties. We take $\Gamma_{1} \curvearrowright Y_{1}$ to be the rigid lattice action $\SL(6,\mathbb{Z})\curvearrowright \mathbb{R}^{6}/\mathbb{R}^{*}_{+}$ such that the Maharam extension $\SL(6,\mathbb{Z}) \curvearrowright \mathbb{R}^{6}$ is ergodic. We then pick a subgroup $H < \Gamma$ in a way such that the conjugates of $\Sigma$ does not intersect $H$ except at $\pm{I}$ and take $\Gamma \curvearrowright X$ to be the generalized Bernoulli action $\Gamma \curvearrowright \{0,1\}^{\Gamma/ H}$ with an unequal measure on the atomic base space $\{0,1\}$. In Section 4 we show that the Maharam extension of this action is cocycle superrigid with any target group in Popa's class of $\mathcal{U}_{\fin}$ groups (see Definition \ref{Def Cocycle Superrigid}). We essentially show that any 1 cocycle restricted to $\SL(6,\mathbb{Z})$ is cohomologous to a group homomorphism using \cite[Theorem 5.3]{PopaVaes11}. The reason we choose $H$ to not intersect conjugates of $\Sigma$ is so that the action $\Sigma \curvearrowright X$ remains mixing, which we need to show cocycle superrigidity. Finally we use the fact that the group $\Gamma$ is perfect to deduce that the 1 cohomology group is isomorphic to $\mathbb{R}$.   

In Section 5 we finally compute the outer automorphism group. We apply cocycle superrigidity to the Zimmer cocycle of an orbit equivalence $\Delta$ of the Maharam extension. We use a general cocycle superrigidity to OE-superrigidity result proven in \cite[Lemma 2.4]{DrimbeVaes23} to see that $\Delta(g \cdot z) = \delta(g) \cdot \Delta(z)$ for an automorphism $\delta \in \Aut(\Gamma)$. We then use some properties of automorphisms of amalgamated free product groups as in Lemma \ref{Lemma: automorphisms of amalgamated free products} and Lemma \ref{Lemma: delta2 is identity} to deduce that $\delta$ is trivial and $\Delta$ is indeed a conjugacy. In Proposition \ref{Prop: every conjugacy is trivial} we note that $\Delta$ is trivial on the generalized Bernoulli action and on $\mathbb{R}^{6}$, we have that $\Delta(y) = g \cdot y$ for some some $g$ normalizing $\SL(6,\mathbb{Z})$ in $\GL(6,\mathbb{R})$ as in \cite[Theorem 6.2]{PopaVaes11}. Thus we deduce that $\Delta$ is of the form $(\id \times \rho_{\lambda})$ where $\rho_{\lambda}(y) = \lambda y$ for a non zero real number $\lambda$ in $\mathbb{R}^{6}$. Finally we note in Theorem \ref{main theorem} that the induced automorphism of $\mathbb{R}^{6}/\mathbb{R}^{*}_{+}$ is inner. Together with the fact that the 1 cohomology group is equal to $\mathbb{R}$ and that the Cartan subalgebra $L^{\infty}(Z_{1}, \eta_{1})$ is unique up to unitary conjugacy, we have the required $\rm{III}_{1}$ factor with the smallest outer automorphism group. 

\section*{Acknowledgements}
The author is supported by FWO research project G090420N of the Research Foundation Flanders. The author would like to thank his PhD supervisor Stefaan Vaes for his immense support throughout the process of writing this article. The author would also like to thank Bram Verjans for a lot of helpful discussions on the topic.

\section{Preliminaries}
\subsection{Facts about amalgamated free products}
We begin this section with the following definition:

\begin{definition}
    \label{Const: Sigma, Lambda and Gamma}
    Let $\Gamma_{1} \coloneqq \SL(6, \mathbb{Z})$ and let $\Sigma$ be the subgroup of $\SL(6, \mathbb{Z})$ given by 

\begin{equation}
\Sigma \coloneqq 
\left(\begin{array}{@{}c|c@{}}
  \pm 1
  & \mathbb{Z}^{1 \times 5} \\
\hline
  0 &
  \GL(5, \mathbb{Z})
\end{array}\right) \cap  \SL(6, \mathbb{Z}) \end{equation}
We put $\Lambda \coloneqq \SL(3,\mathbb{Z})$ and consider the amalgamated free product group $\Gamma \coloneqq \Gamma_{1} *_{\Sigma} (\Sigma \times \Lambda)$ with the natural quotient map $\pi: \Gamma \rightarrow \Gamma_{1}$.
\end{definition}

We shall now state several results about these groups that we need to prove our main theorem. We begin with the following well-known result about automorphisms of linear groups. A proof of this appears in the literature in several places, for example in \cite{MR0049194}.

\begin{proposition}
\label{Prop: outer automorphism groups of linear groups}
    Let $G = \SL(n,\mathbb{Z})$ for $n \geq 3$. Then the only outer automorphisms of $G$ are as follows: 
    \begin{enumerate}
        \item If $n$ is odd, $|\Out(G)| = 2$, and is generated by the order 2 element $\phi \in \Aut(G)$ defined by $\phi(A) = (A^{T})^{-1}$ for all $A \in G$. 
        \item If $n$ is even, $|\Out(G)| = 4$ and is generated by the element $\phi \in \Aut(G)$ as above and the element $\psi \in \Aut(G)$ defined as $\psi(A) = BAB^{-1}$ for all $A \in G$ and a fixed element $B \in \GL(n, \mathbb{Z})$ with determinant -1.
    \end{enumerate}
\end{proposition}

\begin{definition}
    \label{Def: sigma_tilde}
Let $\Sigma < \Gamma_{1}$ as in Definition \ref{Const: Sigma, Lambda and Gamma} and let $e_{1}$ denote the column vector with 1 in the first row and 0 elsewhere. We define the subgroup $\widetilde{\Sigma}$ of $\GL(6,\mathbb{Z})$ as $\widetilde{\Sigma} \coloneqq \{B \in \GL(6, \mathbb{Z}) \; | \; B(e_{1}) = \pm e_{1}\}$. Notice that $\widetilde{\Sigma}$ can be described as follows: 
\begin{equation}
    \widetilde{\Sigma} = 
\left(\begin{array}{@{}c|c@{}}
  \pm 1
  & \mathbb{Z}^{1 \times 5} \\
\hline
  0 &
  \GL(5, \mathbb{Z})
\end{array}\right)
\end{equation}
\end{definition}

\begin{proposition}
\label{Prop: Group homomorphisms preserving Sigma are Inner} 
Any automorphism $\phi$ of $\SL(6,\mathbb{Z})$ such that $\phi(\Sigma) = \Sigma$ is of the form $\phi = \Ad(B)$ with $B \in \widetilde{\Sigma}$.
\end{proposition}
\begin{proof}
   First we note the following: The globally invariant proper subspaces of $\mathbb{R}^{6}$ under the subgroup $\Sigma^{T}$ has to be of dimensions 5. On the other hand the globally invariant proper subspaces of $\mathbb{R}^{6}$ under $\Sigma$ must have dimension 1. Since conjugation does not change the dimension of a globally invariant subspace, we have that $\Sigma^{T}$ cannot be conjugated onto $\Sigma$. By Proposition \ref{Prop: outer automorphism groups of linear groups}, we have that such an automorphism has to be of the form $A \mapsto BAB^{-1}$ with $B \in \GL(6,\mathbb{Z})$ and $\det(B) = \pm 1$. If $W \subset \mathbb{R}^{6}$ is a one dimensional globally invariant subspace for $\Sigma$, then $B\cdot W$ is a globally invariant subspace for $B\Sigma B^{-1}$. Thus if $B\Sigma B^{-1} = \Sigma$ for some $B \in \GL(6,\mathbb{Z})$ then $B \cdot W = W$ and this forces $B$ to be in $\widetilde{\Sigma}$.     
\end{proof}

The rest of this section will be dedicated to constructing a suitable subgroup $H$ of $\Gamma$ as in Definition \ref{Const: II infinity action}. 
\begin{definition}
    \label{Def: elements A_i}
    For a natural number $k \geq 2$, let $p_{0} = 5$, $p_{1} = 7$ and $p_{2},...,p_{k} \geq 11$ be distinct prime numbers. We define now the elements $A_{0}$, $A_{1}$ and $A_{i}$ for $2 \leq i \leq k$ of $\Lambda = \SL(3,\mathbb{Z})$:   
\begin{equation}
        A_{0} = \begin{pmatrix}
    1 & 0 & 0 \\
    0 & p_{0}-2 & 1 \\ 
    0 & -1 & 0
    \end{pmatrix}, \text{ } 
    A_{1} = \begin{pmatrix}
    p_{1} - 2 & 0 & 1 \\
    0 & 1 & 0 \\ 
    -1 & 0 & 0
    \end{pmatrix}, \text{ } 
    A_{i} = \begin{pmatrix}
    p_{i}- 2 & 1 & 0 \\
    -1 & 0 & 0 \\ 
    0 & 0 & 1
    \end{pmatrix}    
\end{equation}
\end{definition}

\begin{lemma}
\label{Lemma: no non trivial homomorphisms of lambda}
    Let $\mathcal{W} = \{A_{j}^{n_{j}} \; | \; 0 \leq j \leq k, 0 \neq n_{j} \in \mathbb{Z}\}$. If $\sigma \in \Aut(\SL(3, \mathbb{Z}))$ and $\sigma(A_{i}) \in \mathcal{W}$ for all $i \in \{0,...,k\}$, then $\sigma = \id$. 
\end{lemma}
\begin{proof}
    Solving the characteristic polynomials, we note that the matrices $A_{i}$ have three distinct eigenvalues: 1, $\frac{p_{i} - 2 \pm \sqrt{p_{i}(p_{i} - 4)}}{2}$. Since the $p_{i}$'s are distinct, two of these eigenvalues are nontrivial algebraic integers in distinct degree 2 field extensions of $\mathbb{Q}$. Denoting by $\spec(A)$ the set of eigenvalues of a matrix $A$, we note the following: 
    \begin{itemize}
        \item $\spec(A_{i}) = \spec(A_{i}^{-1})$ for all $i$
        \item If $i \neq j$, for any nonzero integer $k_{i}$ and any integer $k_{j}$, we have that $\spec(A_{i}^{k_{i}}) \neq \spec(A_{j}^{k_{j}})$
        \item For a fixed $i$, if $k \neq l$, we have that $\spec(A_{i}^{k}) \neq \spec(A_{i}^{l})$.
    \end{itemize}
    By Proposition \ref{Prop: outer automorphism groups of linear groups}, we have that $\sigma$ is either of the form $\Ad(Y)$ for $Y \in \SL(3, \mathbb{Z})$ or of the form $\Ad(Y) \circ \phi$ where $\phi(A) = (A^{T})^{-1}$. In particular we see that $\spec(A) = \spec(\sigma(A))$ for all $A \in \SL(3,\mathbb{Z})$. By the previous paragraph, we have hence that $\sigma(A_{i}) = A_{i}^{\pm 1}$ for all $i$. Now suppose first that $\sigma = \Ad(Y) \circ \phi$ and consider the matrix 
    \begin{equation}
     B = \begin{pmatrix}
        0 & 1 & 0 \\ -1 & 0 & 0 \\ 0 & 0 & 1
    \end{pmatrix}   
    \end{equation}
    and notice that for all $i \geq 2$, we have $\phi(A_{i}) = BA_{i}B^{-1}$. Then letting $C = YB$, we have that $\sigma(A_{i}) = CA_{i}C^{-1}$. Thus for all $i \geq 1$, $C$ either commutes with $A_{i}$ or $CA_{i}C^{-1} = A_{i}^{-1}$. Thus $C$ preserves globally the 1-dimensional eigenspace corresponding to the eigenvalue 1 and also preserves the 2-dimensional eigenspace corresponding to the two other eigenvalues of $A_{i}$. Thus $C$ has a simple block decomposition into a $2 \times 2$ block and a $1 \times 1$ block and by multiplying with $B^{-1}$, we know that $Y$ has the following form: 
    \begin{equation}
    Y = \begin{pmatrix}
        * & * & 0 \\
        * & * & 0 \\
        0 & 0 & \pm 1
    \end{pmatrix}    
    \end{equation}
    Now since $C$ also either commutes with $A_{0}$ or $CA_{0}C^{-1} = A_{0}^{-1}$, by the same argument we have that $Y$ is also necessarily of the following form: 
    \begin{equation}
    Y = \begin{pmatrix}
        \pm 1 & 0 & 0 \\
        0 & * & * \\
        0 & * & *
    \end{pmatrix}    
    \end{equation}
    From the two equations of $Y$, we can conclude that $Y$ is a diagonal matrix with all entries equal to $\pm 1$ in the diagonal. But then we have that $Y(A_{0}^{T})^{-1}Y^{-1} \neq A_{0}^{\pm 1}$, hence giving us a contradiction. Thus $\sigma$ cannot be of the form $\Ad(Y) \circ \phi$. Suppose now that $\sigma = \Ad(Y)$. By a similar argument as earlier, we get once again that $Y$ is a diagonal matrix with $\pm 1$'s in the diagonal. Let us denote the diagonal entries of $Y$ by $\epsilon_{i}$ for $i = 1,2,3$. For $i \geq 2$ we can then check that $YA_{i}Y^{-1}$ has to be equal to $A_{i}$ and this forces $\epsilon_{1} = \epsilon_{2}$. Similarly we can calculate $YA_{1}Y^{-1}$ and check that it is necessarily equal to $A_{1}$ and this forces $\epsilon_{1} = \epsilon_{3}$. Since $Y$ has determinant 1, this implies that $Y = I$.      
\end{proof}

\begin{lemma}
\label{Lemma: Sigma is non-normal}
    For $n \geq 3$, consider the subgroup $\Sigma$ of $\SL(n, \mathbb{Z})$ given by $\Sigma = \{g \in \SL(n,\mathbb{Z}) \; | \; g(e_{1}) = \pm e_{1}\}$. As before consider the subgroup $\widetilde{\Sigma}$ of $\GL(n,\mathbb{Z})$ as $\widetilde{\Sigma} = \{g \in \GL(n, \mathbb{Z}) \; | \; g(e_{1}) = \pm e_{1}\}$. There exist elements $g_{1},...,g_{r} \in \SL(n, \mathbb{Z}) \backslash \Sigma$ such that 
    \begin{equation}
        \bigcap_{i=1}^{r} g_{i}^{-1}\widetilde{\Sigma} g_{i} = \{\pm I\}
    \end{equation}
    Moreover the $g_{i}$'s can be chosen such that $g_{i}g_{j}^{-1} \notin \Sigma$ if $i \neq j$. 
\end{lemma}

\begin{proof}
    For $2 \leq i \leq n$, we define  the elements $\sigma_{i}$ and $\alpha_{i}$ of $\SL(n,\mathbb{Z}) \backslash \Sigma$ as the following linear operators:
\begin{equation}
    \sigma_{i}:
    \begin{cases}
        e_{1}  \mapsto -e_{i}\\
        e_{i} \mapsto e_{1} \\
        e_{j} \mapsto e_{j} \text{ for } j \neq \{1,i\}
    \end{cases}
\end{equation}

\begin{equation}
    \alpha_{i}:
    \begin{cases}
        e_{1}  \mapsto e_{1} + 2e_{i}\\
        e_{i} \mapsto e_{1} + 3e_{i} \\
        e_{j} \mapsto e_{j} \text{ for } j \neq \{1,i\}
    \end{cases}
\end{equation}
Now restricted to the linear subspace $\mathbb{C}e_{1} + \mathbb{C}e_{i}$, the matrices for the operators $\sigma_{i}$ and $\alpha_{i}$ are given by $\left(\begin{array}{cc}
    0 & 1 \\
    -1 & 0
\end{array} \right)$ and $\left(\begin{array}{cc}
    1 & 1 \\
    2 & 3
\end{array} \right)$ respectively. We claim now that 
\begin{equation}
    (\bigcap_{i=2}^{n}\sigma_{i}^{-1}\widetilde{\Sigma}\sigma_{i}) \cap (\bigcap_{j=2}^{n}\alpha_{j}^{-1}\widetilde{\Sigma}\alpha_{j})  =  \{\pm I\}   
\end{equation}
    Clearly we have that $\sigma_{i}^{-1} \widetilde{\Sigma}\sigma_{i} = \{g \in \GL(n,\mathbb{Z}) \; | \; g(e_{i}) = \pm e_{i} \}$. Consider now an element $A \in (\bigcap_{i=2}^{n}\sigma_{i}^{-1}\widetilde{\Sigma}\sigma_{i}) \cap (\bigcap_{j=2}^{n}\alpha_{j}^{-1}\widetilde{\Sigma}\alpha_{j})$. In particular, $A \in \sigma_{i}^{-1} \widetilde{\Sigma} \sigma_{i}$ which then forces $A(e_{i}) = \pm e_{i}$ for all $2 \leq i \leq n$ and we denote by $\epsilon_{i}$ the element in $\pm \{1\}$ such that $A(e_{i}) = \epsilon_{i}e_{i}$. We now write $A(e_{1}) = \epsilon_{1}e_{1} + \sum_{i=2}^{n} x_{i}e_{i}$ with the coefficients $x_{i} \in \mathbb{Z}$ and note that $\epsilon_{1} = \pm 1$. For every $j > 1$, we have that $\alpha_{j}A\alpha_{j}^{-1} \in \widetilde{\Sigma}$. Thus $\alpha_{j}A\alpha_{j}^{-1}(e_{1}) = \pm e_{1}$. It is easy to see that: 
\begin{equation}
    \alpha_{j}^{-1}:
    \begin{cases}
        e_{1}  \mapsto 3e_{1} - 2e_{j}\\
        e_{j} \mapsto -e_{1} + e_{j} \\
        e_{k} \mapsto e_{k} \text{ for } k \neq \{1,j\}
    \end{cases}
\end{equation}
Then one can explicitly compute the values of $\alpha_{j}A\alpha_{j}^{-1}(e_{1})$ and see that
for $i \neq \{1,j\}$, $x_{i} = 0$. Since this holds for all $j$, we have that $x_{i} = 0$ for all $i \geq 2$. Also, one can check that the coefficient of $e_{j}$ in $\alpha_{j}A\alpha_{j}^{-1}(e_{1})$ is equal to $6(\epsilon_{1} - \epsilon_{j})$ and hence $\epsilon_{1} - \epsilon_{j} = 0$ for all $j$ and consequently $A = \pm I$. For $i \neq j$ and $k, l$, it follows that $\sigma_{i}\sigma_{j}^{-1}$, $\alpha_{i}\alpha_{j}^{-1}$ or $\sigma_{k}\alpha_{l}^{-1}$ are not elements of $\Sigma$ and that concludes the proof of our claim. 
\end{proof}

\begin{definition}
\label{Def: subgroup H}
    For $k \geq 2$, we choose elements $g_{1},...,g_{k} \in \SL(6,\mathbb{Z}) \backslash \Sigma$ as in Lemma \ref{Lemma: Sigma is non-normal} such that $g_{i}g_{j}^{-1} \notin \Sigma$ for $i \neq j$. We define $H$ as the subgroup of $\Gamma$ generated by $\pm I \in \Sigma$, the elements $g_{i}^{-1}A_{i}g_{i}$ for $i = 1,...,k$ and $A_{0}$. Recall that $g_{i} \in \Gamma_{1}$ for all $1 \leq i \leq k$ and $A_{i} \in \Lambda$ for all $0 \leq i \leq k$ as in Definition \ref{Def: elements A_i}. Notice that $H$ is isomorphic to $\mathbb{Z}/2\mathbb{Z} \times \mathbb{F}_{k+1}$. 
\end{definition}   

Now we state some preliminaries about reduced words in an amalgamated free products $G = G_{1} *_{K} (K \times L)$, where $G_{1}, L$ and $K$ are non-trivial discrete countable groups with $K \neq G_{1}$. We look at some applications of such reduced words in computing their automorphism groups. 

\begin{definition}
\label{Def: reduced words}
    Any word of the form $a_{0}b_{1}a_{1}b_{2}a_{2}\dotdot b_{n}a_{n}$ with $n \geq 1$ where $a_{i} \in L$ with $a_{j} \neq e$ for $j =1,...,n-1$ and $b_{i} \in G_{1} \backslash K$ for all $i$ is called \textit{reduced expression} in $G$.
\end{definition}

\begin{remark}
\label{Remark: reduced words uniqueness}
We note that every element of $G \backslash (K \times L)$ can be written in this form. However such expressions are not unique. In fact two such expressions $a_{0}b_{1}a_{1}\dotdot b_{n}a_{n}$ and $c_{0}d_{1}c_{1}\dotdot d_{m}c_{m}$ define the same element in $G $ if and only if $n =m$, $a_{i} = c_{i}$ for all $i$ and there exists $\sigma_{1},...,\sigma_{n-1} \in K$ such that: 
\begin{enumerate}
    \item $b_{1} = d_{1}\sigma_{1}$
    \item $\sigma_{i-1}b_{i} = d_{i}\sigma_{i}$ for $i \in \{2,...,n-1\}$
    \item $\sigma_{n-1}b_{n} = d_{n}$
\end{enumerate}
This automatically forces $c_{0}d_{1}c_{1}\dotdot d_{i}\sigma_{i} = a_{0}b_{1}a_{1}\dotdot b_{i}$ in $G$ for all $i \in \{1,...,n-1\}$. For an element $g \in G$, we define the length $|g|$ as the length of a reduced expression of $g$. By convention, we say that an element in $K$ has length 0.
\end{remark}

Now recall the Definitions of the groups $H$, $\Gamma_{1}$, $\Gamma$, $\Sigma$ and $\Lambda$. Comparing reduced expressions of the elements in $g^{-1}Hg$ for $g \in \Gamma$, we get the following result: 

\begin{lemma}
    \label{Lemma: conjugates of sigma dont intersect H}
For any element $g \in \Gamma$, we have that $g\Sigma g^{-1} \cap H \subseteq \{\pm I\}$.  
\end{lemma}
\begin{proof}
    Recall that $H$ is generated by $\{\pm I\}$, $A_{0}$ and the elements $g_{i}^{-1}A_{i}g_{i}$. Since we can choose $g_{i}$'s such that $g_{i}g_{j}^{-1} \notin \Sigma$, a reduced expression of an element in $H$ is either a positive power of $A_{0}$ or has length at least 3. In either cases we have that $H$ does not intersect $\Sigma$ except at $\{\pm I\}$. Now suppose we have $g = \sigma \lambda \in \Sigma \times \Lambda$ and $h \in H$ such that $h \notin \{\pm I\}$. Then $ghg^{-1} \in \Sigma$ if and only if $\lambda h \lambda^{-1} \in \Sigma$. Once again $\lambda h \lambda^{-1}$ has a reduced expression of length at least 5 or $h$ is a power of $A_{0}$ and $\lambda h \lambda^{-1} \in \Lambda$: either ways, $\lambda h \lambda^{-1} \notin \Sigma$. Now let $g \notin \Sigma \times \Lambda$ and let $g = a_{0}b_{1}a_{1}...b_{n}a_{n}$ be a reduced expression for $g$ with $b_{i} \in \Gamma \backslash \Sigma$ and $a_{i} \in \Lambda$. First suppose that $h = A_{0}^{n}$, then $\lambda = a_{n}ha_{n}^{-1} \in \Lambda$ and $|ghg^{-1}| \geq |b_{n}\lambda b_{n}^{-1}| \geq 3$. So without loss of generality (up to inverses of letters), we can assume that a reduced expression of $h$ is of the form $\pm g_{i_{1}}A_{i_{1}}g_{i_{1}}^{-1}...g_{i_{k}}A_{i_{k}}g_{i_{k}}^{-1}$ for indices $i_{1},i_{2},...,i_{k}$ where the letters $g_{i}$'s and $A_{i}$'s vary over $1 \leq i \leq k$.
    If $a_{n} \neq e$ then the reduced expression of $ghg^{-1}$ clearly has length equal to $2|g| + |h|$ and hence it cannot be in $\Sigma$. So we are left with the case when $a_{n} = e$ and hence $g = a_{0}b_{1}a_{1}...b_{n}$.
    
    First let us assume $n > 1$. Notice that if $b_{n}g_{i_{1}} \notin \Sigma$ and $b_{n}g_{i_{k}} \notin \Sigma$, once again we have that the length of $ghg^{-1}$ is equal to $2|g| + |h| - 2$ and once again it cannot be in $\Sigma$. If $\sigma = b_{n}g_{i_{1}} \in \Sigma$, then $b_{n-1}a_{n}b_{n}hg_{i_{1}} = b_{n-1}a_{n}\sigma = b_{n-1}\sigma a_{n}$ and $b_{n-1}\sigma$ cannot be in $\Sigma$ as that would imply $b_{n-1} \in \Sigma$. As a result (applying a symmetric argument for $b_{n}g_{i_{k}}$), we have that the length of $ghg^{-1}$ is greater than $|h|$ and hence it is not in $\Sigma$. If $n = 1$, since $\Lambda$ commutes with $\Sigma$, we can assume $g \in \Gamma \backslash \Sigma$. If $ghg^{-1} \in \Sigma$ and $gg_{i_{1}} \in \Sigma$, then $|ghg^{-1}| = |h| - 2$ which is still at least 1. If $gg_{i_{1}} \notin \Sigma$ then $|ghg^{-1}| = |h| + 2$, and in both cases we can easily conclude that $ghg^{-1} \notin \Sigma$ hence concluding our proof. 
\end{proof}

We now state the following rather ad-hoc result about automorphisms of general amalgamated free product groups that we need for proving the main result of this paper. Note that for a general amalgamated free product group $G_{1} *_{K} G_{2}$, one can similarly define reduced words exactly as in Definition \ref{Def: reduced words} by asking that $a_{i} \in G_{2} \backslash K$ and $a_{j} \neq e$. One can then define the \textit{length} of an element $g \in G$, denoted by $|g|$ as the length of a reduced word (which is again well defined by the same argument as in Remark \ref{Remark: reduced words uniqueness}). 

\begin{lemma}
\label{Lemma: automorphisms of amalgamated free products}
    Suppose $G_{i},G'_{i}$ for $i \in \{1,2\}$ are discrete groups with property $(T)$. Further suppose $K \neq G_{i}$ and $K'\neq G'_{i}$ for $i=1,2$ and $\delta': G = G_{1} *_{K} G_{2} \rightarrow G'_{1} *_{K'} G'_{2} = G'$ is an isomorphism between the amalgamated free product groups. Then $\delta'$ is the composition of an inner automorphism and $\delta$ where $\delta$ satisfies exactly one of the following: 
    \begin{enumerate}
        \item $\delta(G_{1}) = G'_{1}$, $\delta(G_{2}) = G'_{2}$ and $\delta(K) = K'$.
        \item $\delta(G_{1}) = G'_{2}$, $\delta(G_{2}) = G'_{1}$ and $\delta(K) = K'$.
    \end{enumerate}
\end{lemma} 

\begin{proof}
     As in the proof of Part 4 in \cite[Theorem 3.1]{DeprezVaes11} for a subgroup $G_{0} < G$, letting $|G_{0}| \coloneqq \sup \{|g|, \; g \in G\}$, we have that $|G_{0}| < \infty$ if and only if there exists $g \in G_{0}$ such that $gG_{0}g^{-1} \subseteq G_{1}$ or $gG_{0}g^{-1} \subseteq G_{2}$. Also as in \cite[Theorem 3.1] {DeprezVaes11}, if $G_{0}$ has property (T), then $|G_{0}| < \infty$. Applying this to the group $\delta'(G_{2})$, we can assume without loss of generality that we have an element $g \in G'$ such that $\Ad(g) \circ \delta'(G_{2}) \subseteq G'_{2}$. Then let $\delta_{1} = \Ad(g) \circ \delta'$ and so that $\delta_{1}(G_{2}) \subseteq G'_{2}$. Now since $G_{1}$ is also a property (T) group, there exists $h \in G'$ such that $\Ad(h) \circ \delta_{1}(G_{1}) \subseteq G'_{1}$ or $\Ad(h) \circ \delta_{1}(G_{1}) \subseteq G'_{2}$. However we have that $K' \neq G'_{1}$ and $K' \neq G'_{2}$ and hence the whole group $G'$ can not be generated by $G'_{2} \bigcup h^{-1} G'_{2}h$. Thus it is necessary that $\Ad(h) \circ \delta_{1}(G_{1}) \subseteq G'_{1}$.   
     
    Now notice that $\delta_{1}(G_{1})$ and $\delta_{1}(G_{2})$ generates $G'$ and hence in particular every element of $G_{1}'$ has to be generated by elements of $h^{-1}G'_{1}h$ and $G'_{2}$. One can check that this forces such an element $h$ to be of the form $lk$ where $k \in G'_{1}$ and $l \in G'_{2}$. Now letting $\delta = \Ad(k) \circ \delta_{1}$, we have that $\delta(G_{1}) \subseteq G'_{1}$ and $\delta(G_{2}) \subseteq G'_{2}$. Since $\delta$ is surjective, we moreover have that $\delta(G_{1}) = G'_{1}$ and $\delta(G_{2}) = G'_{2}$. It immediately follows that $\delta(K) = K'$. On the other hand if we started with a $g \in G'$ such that $\Ad(g) \circ \delta(G_{2}) \subseteq G'_{1}$, then similarly we would have an isomorphism $\delta$ such that $\delta(G_{1}) = G'_{2}$, $\delta(G_{2}) = G'_{2}$ and $\delta(K) = K'$.      
\end{proof}

\begin{corollary}
    \label{Corr: automorphisms of Gamma}
    Let $\Gamma$ as in Definition \ref{Const: Sigma, Lambda and Gamma} and let $\delta \in \Aut(\Gamma)$. Then after composing with an inner automorphism, $\delta$ is of the form $\delta_{1} \ast (\delta_{1}|_{\Sigma} \times \delta_{2})$ where $\delta_{1} \in \Aut(\Gamma_{1})$ and $\delta_{2} \in \Aut(\Lambda)$. 
\end{corollary}
\begin{proof}
    By Lemma \ref{Lemma: automorphisms of amalgamated free products}, we have that after multiplying by an inner automorphism, either $\delta(\Gamma_{1}) = \Gamma_{1}$ or $\delta(\Gamma_{1} \times \Lambda)$. If $\delta(\Gamma_{1}) = \Sigma \times \Lambda$, then denoting the coordinate projections in $\Sigma \times \Lambda$ by $\pi_{1}$ and $\pi_{2}$, we have that $\pi_{2} \circ \delta$ is a homomorphism from $\SL(6,\mathbb{Z})$ to $\SL(3,\mathbb{Z})$. By \cite[Lemma 3]{Weinberger97}, such a homomorphism needs to be trivial and hence $\delta(\Gamma_{1}) \subseteq \Sigma$ which is absurd. Hence $\delta(\Gamma_{1}) = \Gamma_{1}$ and $\delta(\Sigma \times \Lambda) = \Sigma \times \Lambda$ and $\delta$ preserves the amalgam $\Sigma$. 

    \textit{Claim 1: The center $\cZ(\Sigma)$ of $\Sigma$ is equal to $\{\pm I\}$.} To prove Claim 1, suppose that an element $\sigma \in \Sigma$ is of the form: 
    \begin{align*}
       \sigma = 
        \left(\begin{array}{@{}c|c@{}}
        \epsilon
        & v \\
        \hline
        0 &
        A
        \end{array}\right)
    \end{align*}
    where $A \in \GL(5,\mathbb{Z})$, $v = (v_{1},v_{2},...,v_{5})$ is a row vector in $\mathbb{Z}^{5}$ and $\epsilon = \pm 1$. Notice that if $\epsilon = 1$, then $\det(A)$ has to be $1$ and hence $A \in \SL(5,\mathbb{Z})$. Since $A$ has to commute with all elements in $\SL(5,\mathbb{Z})$, we have that $A \in \cZ(SL(5,\mathbb{Z}))$. Since $\SL(5,\mathbb{Z})$ is simple (see \cite[Example 1.2.4]{DeMedtsLectureNotes}), we have that $A = I$. Similarly if $\epsilon = -1$, notice that $\det(A)$ has to be $-1$, which implies that $-A \in \SL(5,\mathbb{Z})$ and similarly, $A = -I$. Now for $1 \leq i \leq 6$ let $\rho_{i}$ be matrices in $\Sigma$ that have 1's along the diagonal, 1 in the position (row i, column 6) and 0 everywhere else. Since $\sigma$ commutes with $\rho_{i}$'s, one can check that this implies $v = 0$, thus proving the claim.

    Now we proceed with the proof of the lemma, notice that the only thing to actually prove is that $\delta(\Lambda) = \Lambda$. Let $\pi_{1}$ and $\pi_{2}$ be the coordinate projections in $\Sigma \times \Lambda$. Consider the group homomorphisms $\phi \coloneqq \pi_{1} \circ \delta : \Sigma \times \Lambda \rightarrow \Sigma$ and let $\Sigma_{0}$ be $\phi(\Lambda)$. We will show that $\Sigma_{0} \subseteq \cZ(\Sigma)$. If $\sigma_{0} = \phi( \lambda_{0}) \in \Sigma_{0}$, take any element $\phi(\sigma) \in \Sigma$. Notice that every element of $\Sigma$ is of this form as $\delta: \Sigma \rightarrow \Sigma$ is an isomorphism. Then we have: 
    \begin{align*}
       \phi(\sigma) \sigma_{0} \phi(\sigma)^{-1} = \phi(\sigma \lambda_{0} \sigma^{-1}) = \phi(\lambda_{0}) = \sigma_{0}
    \end{align*}
    thus proving our claim. Now from Claim 1, $\Sigma_{0} = \{I\}$ or $\Sigma_{0} = \{\pm I\}$. Now looking at the homormorphism $\phi|_{\Lambda} :\Lambda \rightarrow \Sigma$, we have that the image cannot be $\{ \pm I\}$ because $\Lambda$ is a simple group and the kernel would be a nontrivial proper subgroup. Hence $\phi|_{\Lambda} = \id$ and hence $\delta(\Lambda) = \Lambda$, as needed.  
\end{proof}

\begin{lemma}
\label{Lemma: delta2 is identity}
    Let $\Gamma, H $ be as in Definition \ref{Const: Sigma, Lambda and Gamma} and Definition \ref{Def: subgroup H} respectively and $\delta \in \Aut(\Gamma)$. Assume that $g \in \Gamma$ and $H_{0} < H$ is a finite index subgroup such that $h\delta(H_{0})h^{-1} \subset H$. Then $\delta$ is inner. 
\end{lemma}

\begin{proof}
    By Corollary \ref{Corr: automorphisms of Gamma}, up to an inner automorphism, $\delta$ is of the form $\delta_{1} *_{\Sigma} (\delta_{1}|_{\Sigma} \times  \delta_{2})$ where $\delta_{1} \in \Aut(\Gamma_{1})$, $\delta_{2}\in \Aut(\Lambda)$ and $\delta_{1}(\Sigma) = \Sigma$. Since $A_{0} \in H$ and $H_{0}< H$ is a finite index subgroup, we can choose an integer $s > 0$ such that $A_{0}^{s} \in H_{0}$. Let $g = a_{0}b_{1}a_{1}\dotdot b_{n}a_{n}$ be a reduced expression for $g$. Then we have that:  
\begin{equation}
\label{eq delta2IsId.1}
    g\delta(A_{0}^{s})g^{-1} = a_{0}b_{1}a_{1}\dotdot a_{n-1}b_{n}(a_{n}\delta_{2}(A_{0}^{s})a_{n}^{-1})b_{n}^{-1}a_{n-1}^{-1}\dotdot b_{1}^{-1}a_{0}^{-1}
\end{equation}
is a reduced expression for $g\delta(A_{0}^{s})g^{-1}$ and belongs to $H$. Note that reduced expressions in $H$ are precisely the words of the form $b_{1}b_{2}\dotdot b_{k}$ where each $b_{i}$ is a block containing either a letter $A_{0}^{m}$ for some integer $m \neq 0$ or a block containing three letters $g_{i}^{-1}A_{i}^{n}g_{i}$ for some integer $n \neq 0$. Thus $a_{n}\delta_{2}(A_{0}^{s})a_{n}^{-1}$ must be of the form $A_{i}^{r_{i}}$ for some non-zero integer $r_{i}$ and some $0 \leq i \leq k$. As in the proof of Lemma \ref{Prop: outer automorphism groups of linear groups}, since $A_{i}^{r_{i}}$ for $i \neq 0$ and $\delta_{2}(A_{0}^{s})$ have distinct eigenvalues, we have that $a_{n}\delta_{2}(A_{0}^{s})a_{n}^{-1}$ has to be of the form $A_{0}^{r_{0}}$. Hence equation \ref{eq delta2IsId.1} can be written as: 
\begin{equation}
\label{eq delta2IsId.2}
    g\delta(A_{0}^{s})g^{-1} = a_{0}b_{1}a_{1}\dotdot a_{n-1}b_{n}A_{0}^{r_{0}}b_{n}^{-1}a_{n-1}^{-1}\dotdot b_{1}^{-1}a_{0}^{-1}
\end{equation}
Now comparing equation \ref{eq delta2IsId.2} with a reduced expression in $H$ and using Remark \ref{Remark: reduced words uniqueness}, we see that there exists $\sigma \in \Sigma$ such that $a_{0}b_{1}a_{1}...a_{n-1}b_{n}\sigma \in H$. We can conclude that $g \in H(\Sigma \times \Lambda)$. Multiplying $g$ on the left with an element of $H$, we may assume that $g \in \Sigma \times \Lambda$. Now we write $g = a\sigma^{-1}$ with $a \in \Lambda$ and $\sigma \in \Sigma$. For all $i \in \{1,...,k\}$, since $g_{i}^{-1}A_{i}g_{i} \in H$, we can find positive integers $s_{i}> 0$ such that $g_{i}^{-1}A_{i}^{s_{i}}g_{i} \in H_{0}$. Thus we have that $a\sigma^{-1}\delta_{1}(g_{i})^{-1} \delta_{2}(A_{i}^{s_{i}})\delta_{1}(g_{i})\sigma a^{-1} \in H$. Note that $a(\sigma^{-1}\delta_{1}(g_{i}^{-1})\delta_{2}(A_{i}^{s_{i}})(\delta_{1}(g_{i})\sigma)a^{-1}$ is a reduced expression. Once again we compare this to reduced expressions in $H$ and conclude that either $a = e$ or $a$ is a nonzero power of $A_{0}$. Thus we have that $a \in H$. Multiplying $g$ on the left with $a^{-1}$ we may assume that $g = \sigma^{-1} \in \Sigma$. Since for all $i \in \{1,...,k\}$, the elements $g_{i}^{-1}A_{i}g_{i}A_{0}$ belong to $H$, a non zero power of these belong to $H_{0}$. Thus we have: 
\begin{equation}
\label{eq delta2IsId.3}
    (\sigma^{-1}\delta_{1}(g_{i})^{-1})\delta_{2}(A_{i})\delta_{1}(g_{i})\delta_{2}(A_{0})\sigma \in H
\end{equation}
Once again comparing equation \ref{eq delta2IsId.3} to a standard reduced expression in $H$, and looking at the eigenvalues, we get that $\delta_{2}(A_{i}) = A_{i}^{\pm 1}$ for all $i \in \{0,...,k\}$. Now by Lemma \ref{Lemma: no non trivial homomorphisms of lambda}, we have that $\delta_{2}|_{\Lambda} = \id$. 

Now we note that $g_{i}A_{i}^{s_{i}}g_{i} \in H_{0}$ for all $i \in \{1,...,k\}$ and we finally get that
\begin{equation}
\label{eq delta2IsId.4}
\sigma^{-1}\delta_{1}(g_{i})^{-1}A_{i}^{s_{i}}\delta_{1}(g_{i})\sigma \in H
\end{equation}
Once again from equation \ref{eq delta2IsId.4} and Remark \ref{Remark: reduced words uniqueness}, we find elements $\sigma_{i} \in \Sigma$ such that $\delta_{1}(g_{i})\sigma = \sigma_{i}g_{i}$ for all $i$. By Proposition \ref{Prop: Group homomorphisms preserving Sigma are Inner}, $\delta_{1} = \Ad(X)$ with $X \in \widetilde{\Sigma}$. Thus we have that $Xg_{i}X^{-1}\sigma = \sigma_{i}g_{i}$ and hence $X^{-1}\sigma = g_{i}^{-1}X^{-1}\sigma_{i}g_{i}$. Since $X^{-1}\sigma_{i} \in \widetilde{\Sigma}$, we have that $X^{-1}\sigma \in \bigcap_{i=1}^{k}g_{i}^{-1}\widetilde{\Sigma}g_{i}$. Thus by Lemma \ref{Lemma: Sigma is non-normal}, we have that $X^{-1}\sigma \in \{ \pm I\}$. Hence we have $X = \sigma$ or $X = - \sigma$ and in both cases, $\Ad(X) = \Ad(\sigma)$, thus concluding our proof.  
\end{proof}

\begin{lemma}
    \label{infiniteindex}
    If $g \in \Gamma \backslash H$, then $gHg^{-1} \cap H$ has infinite index in $H$. 
\end{lemma}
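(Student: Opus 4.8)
The plan is to argue by contradiction and reduce the statement to the combinatorial analysis already carried out in the proof of Lemma \ref{delta2 is identity}. Suppose, to the contrary, that $K := hHh^{-1}\cap H$ has \emph{finite} index in $H$. Since $K\subseteq hHh^{-1}$, conjugating by $h^{-1}$ yields $h^{-1}Kh\subseteq H$. Thus, setting $H_0 := K$, we are exactly in the hypothesis of Lemma \ref{delta2 is identity} with the automorphism $\delta=\id$ and with $h^{-1}$ playing the role of the element called $h$ there: $H_0$ is a finite-index subgroup of $H$ and $h^{-1}\id(H_0)h=h^{-1}H_0h\subseteq H$. Note also that finite index guarantees that a positive power $A_0^{s}$ of $A_0$ and positive powers $g_i^{-1}A_i^{s_i}g_i$ of each remaining generator of $H$ already lie in $H_0$, so the specific elements used in the proof of Lemma \ref{delta2 is identity} are all available.

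The one subtlety is that the \emph{statement} of Lemma \ref{delta2 is identity}, namely that $\delta$ is inner, is vacuous for $\delta=\id$; what we actually need is the sharper information visible inside its proof, that the conjugating element itself lies in $H$. So I would re-run that argument specialized to $\delta=\id$ (hence $\delta_1=\id$ and $\delta_2=\id$, with no inner adjustment needed in the reduction of Lemma \ref{amalagamtedfreeproductisomorphisms}), tracking the element $t:=h^{-1}$ through the successive reductions. Writing a reduced expression $t=a_0 h_1 a_1\dotdot h_n a_n$, the element $tA_0^{s}t^{-1}\in H$ has the reduced form displayed in equation \ref{eq delta2IsId.1}; comparing with the block form of reduced words in $H$ from Remark \ref{reducedwords} and using the eigenvalue separation recorded in the proof of Lemma \ref{noNonTrivialGroupHomLambda}, the innermost block $a_nA_0^{s}a_n^{-1}$ must be a power of $A_0$, which forces $t\in H(\Sigma\times\Lambda)$. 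After a left multiplication by an element of $H$ we may assume $t=a\sigma^{-1}\in\Sigma\times\Lambda$; feeding in $g_i^{-1}A_i^{s_i}g_i\in H_0$ and comparing reduced words again gives $a\in H$, so after a further left multiplication we may assume $t=\sigma^{-1}\in\Sigma$.

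At the final stage the restriction $\delta_1$ is literally the identity, i.e. $\Ad(X)=\id$ for the element $X\in\widetilde{\Sigma}$ produced by Proposition \ref{groupHomPreservingSigma}, so $X=\pm I$; the relation $X^{-1}\sigma\in\bigcap_{i=1}^{k} g_i^{-1}\widetilde{\Sigma}g_i=\{\pm I\}$ furnished by Lemma \ref{NonNormality1} then forces $\sigma=\pm I$. Hence $t=\sigma^{-1}=\pm I\in H$, and unwinding the left multiplications by elements of $H$ we conclude $t=h^{-1}\in H$, that is, $h\in H$. This contradicts $h\in\Gamma\setminus H$, so $K$ must have infinite index in $H$, which is the assertion of the lemma.

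The main obstacle is precisely the reduced-word bookkeeping in the middle paragraph: one must check at each reduction that the conjugated generators of $H_0$ land back in $H$ only when the outer syllables of $t$ can be absorbed into $\Sigma$, which is what drives $t$ into successively smaller pieces and ultimately into $\{\pm I\}$. As a more conceptual alternative one could instead work with the Bass--Serre tree of $\Gamma=\Gamma_1 *_{\Sigma}(\Sigma\times\Lambda)$: a finite-index subgroup of $H$ shares the minimal $H$-invariant subtree $T_H$, while its containment in $hHh^{-1}$ places its minimal subtree inside $hT_H$, so that $h$ stabilizes $T_H$ setwise; identifying the setwise stabilizer of $T_H$ with $H$ reproduces the same contradiction, but this still ultimately rests on the same reduced-word computations encoded in Lemma \ref{delta2 is identity} and Lemma \ref{NonNormality1}.
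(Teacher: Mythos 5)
Your proof is correct and follows essentially the same route as the paper, which likewise deduces the lemma by applying Lemma \ref{delta2 is identity} with $\delta=\id$ to the finite-index subgroup $H_0=hHh^{-1}\cap H$. In fact you are more careful than the paper's one-line proof: as you observe, the bare conclusion ``$\delta$ is inner'' is vacuous when $\delta=\id$, and the contradiction really rests on the sharper information inside the proof of Lemma \ref{delta2 is identity} --- that after the successive reductions the conjugating element is forced into $\{\pm I\}\subset H$, hence $h\in H$ --- so your explicit re-run of that argument with $\delta_1=\delta_2=\id$ supplies exactly the detail the paper leaves implicit.
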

\begin{proof}
    Let $K = gHg^{-1} \cap H$ for some $g \in \Gamma \backslash H$ and suppose that $K$ has finite index in $H$. Then there exists a positive integer $s > 0$ such that $A_{0}^{s} \in K$ so that $g^{-1}A_{0}^{s}g \in H$. Suppose that $g$ has a reduced expression that begins with an element $a \in \Gamma_{1} \backslash \Sigma$. Recall that reduced expressions in $H$ consist of elements of the form $A_{0}^{s}$ of blocks of the form $g_{i}A_{i}^{r_{i}}g_{i}^{-1}$ for non zero integers $r_{i},s$ and for $1 \leq i \leq n$, exactly as in the proof of Lemma \ref{Lemma: delta2 is identity}. Then it is easy to see that the reduced expression of $g^{-1}A_{0}^{s}g$ is also of the form $B_{k}^{-1}B_{k-1}^{-1}..B_{1} A_{0}^{s} B_{1}B_{2}...B_{k}$ with $k > 0$, where $B_{j}$'s are such blocks and the first block $B_{1}$ is strictly of the form $g_{i}A_{i}^{r_{i}}g_{i}^{-1}$ for some $1 \leq i \leq n$ and a non zero integer $r_{i}$. One can then conclude that the reduced expression for $g$ is precisely of the form $B_{1}B_{2}...B_{k}$, and hence $g \in H$, which is a contradiction. 

    For the second, case consider now that $g$ has a reduced expression that begins with an element $\lambda \in \Lambda$. Then for some $1 \leq i \leq n$, we have that $g^{-1} (g_{i}A_{i}^{s}g_{i}^{-1}) g \in H$. Looking at its reduced expression in $H$, one can immediately conclude that $\lambda$ must be of the form $A_{0}^{s}$ for some non zero integer $s$ and that the reduced expression of $g$ must be of the form $B_{1}B_{2}....B_{k}$ with $k > 0$ and $B_{1}$ is of the form $A_{0}^s$. Thus once again this shows that $g \in H$, hence giving us a contradiction. Notice that the same argument holds if $g \in \Lambda$. For the last case, suppose that $g = \sigma\lambda \in \Sigma \times \Lambda$ then by the exact same argument we have that $\sigma \lambda$ must be equal to $A_{0}^{s}$ and hence belong to $H$. 
\end{proof}

\begin{lemma}
    \label{Lemma: infinite conjugacy class}
    Every element $g \in \Gamma \backslash \{\pm I\}$ has an infinite conjugacy class.   
\end{lemma}
\begin{proof}
    Recall that $\Gamma = \Gamma_{1} \ast_{\Sigma} (\Sigma \times \Lambda)$. It is well known that $\PSL(6,\mathbb{Z})$ is an icc group and hence for $g \in \Sigma$ such that $g \neq \{\pm I\}$, $g$ has an infinite conjugacy class in $\Gamma$. If $g \in (\Sigma \times \Lambda) \backslash \Sigma$, then we can pick infinitely many distinct elements $c_{k} \in \Gamma_{1} \backslash \Sigma$ and note that $c_{k}gc_{k}^{-1}$ are all distinct elements in $\Gamma$. Hence it has an infinite conjugacy class. Otherwise, consider a reduced expression $g = a_{0}b_{1}a_{1}...b_{n}a_{n}$ with $a_{i} \in \Lambda$ and $b_{i} \in \Gamma_{1} \backslash \Sigma$ as in Remark \ref{Remark: reduced words uniqueness}. If $a_{0}$ and $a_{n}$ are both nontrivial then we can pick infinitely many distinct $c_{k} \in \Gamma_{1} \backslash \Sigma$, and if they are both trivial we pick infinitely many distinct $c_{k} \in \Lambda$. If $a_{0} = e$ and $a_{n} \neq e$, we can pick infinitely many distinct elements $c_{k} \in \Lambda$ such that $c_{k} \neq a_{n}^{-1}$ for any $k$, and similarly for $a_{0} \neq e$ and $a_{n} = e$. In all these cases $c_{k}^{-1}gc_{k}$ are all distinct and hence $g$ has an infinite conjugacy class. 
\end{proof}

\subsection{Terminology on group actions}

Let $G \curvearrowright (W,\rho)$ be an action of a countable discrete group on a standard measure space. A Borel subset $E \subset W$ is called a \textit{wandering set} if $\rho(gE \cap E) = 0$ for all non-trivial $g \in G$. By \cite[Lemma 1.0.7 and Theorem 1.1.1]{Aaronson97}, we have the following proposition: 

\begin{proposition}
    \label{Prop: Dissipative part of action exists}
    Let $G \curvearrowright (W,\rho)$ be an essentially free countable discrete group action on a standard measure space. Then there exists a measurable subset $\cD(W) \subset W$ such that the following conditions hold: 
    \begin{enumerate}
        \item For every wandering set $E \subset W$, we have $\rho(E \backslash \cD(W)) = 0$,
        \item If $U \subset W$ is a non-null measurable subset, then there exists a non-null wandering set $E \subset U$. 
    \end{enumerate}
    Moreover the set $\cD(W)$ is unique up to measure zero, i.e. if $\cD'(W)$ satisfies the conditions above, then $\rho(\cD(W) \Delta \cD'(W)) = 0$. 
\end{proposition}

Let $G \curvearrowright (W,\rho)$ be an essentially free countable discrete group action. Then the measurable subset $\cD(W)$ is called the \textit{dissipative part} of the action. Notice now that for all $g \in G$, the subset $g \cdot \cD(W)$ satisfies the two conditions in Proposition \ref{Prop: Dissipative part of action exists} as well. Since $\cD(W)$ is unique, we have that $\rho(\cD(W) \Delta g \cdot \cD(W)) = 0$ for all $g \in G$. So, after possibly discarding a measure zero set, one can assume that $\cD(W)$ is $G$-invariant. Hence the complement $\cC(W) = W \backslash \cD(W)$ is also $G$-invariant and $G \curvearrowright \cC(W)$ is called the \textit{conservative part} of the action. Any nonsingular action hence decomposes into its dissipative and conservative parts. 

\begin{definition}
    \label{Def: dissipative and conservative actions}
    Let $G \curvearrowright (W,\rho)$ be a nonsingular action of a countable discrete group on a standard measure space. Then the action is called \textit{dissipative} if $\cD(W) = W$ up to measure zero and \textit{conservative} if $\cC(W) = W$ up to measure zero.
\end{definition}

Clearly if the action is furthermore ergodic, then it is either dissipative or conservative. Now we define the notion of a fundamental domain and state the following proposition that outlines the difference in behaviour of the conservative and dissipative parts. For a proof of Proposition \ref{Prop: difference in dissiptaive and conservative parts} below, we refer the reader to \cite[Propositions 1.6.1 and 1.6.2]{Aaronson97}. 

\begin{definition}
\label{Def: fundamental domain}
    Let $G \curvearrowright (W,\rho)$ be an essentially free action of a countable discrete group. A wandering set $F \subset W$ is called a \textit{fundamental domain} if $\bigcup_{g \in G}g \cdot F = W$ up to measure zero. 
\end{definition}

\begin{proposition}
    \label{Prop: difference in dissiptaive and conservative parts}
    Let $G \curvearrowright (W,\rho)$ be an essentially free action of a countable discrete group. Then the action on the dissipative part $G \curvearrowright \cD(W)$ admits a fundamental domain. However the action on the conservative part $G \curvearrowright \cC(W)$ is \textit{recurrent}, i.e., for every non-null measurable subset $E \subset \cC(W)$, there are infinitely many $g \in G$ such that $\rho(g \cdot E \cap E) > 0$.  
\end{proposition}

Notice that from Proposition \ref{Prop: difference in dissiptaive and conservative parts}, it follows immediately that a probability measure preserving action $G \curvearrowright (W,\rho)$ is always conservative. We now state the following result, essentially due to Schmidt and Walters, \cite[Theorem 2.3]{MR0675419}. As in \cite[Theorem 7.3]{AIM21}, we note that the recurrence assumption is weaker than the proper ergodicity assumption in \cite{AIM21} but the same proof holds for recurrent actions. We also remark that this result holds as in the references, more generally for locally compact group actions, but we do not need it for this paper. 

\begin{theorem}\textup{(c.f. \cite[Theorem 7.3]{AIM21})}
    \label{Thm: Recurrent and mixing means ergodic}
    Let $G$ be a countable discrete group. Let $G \curvearrowright (W,\rho)$ be a recurrent nonsingular action and $G \curvearrowright (W',\rho')$ be a p.m.p. mixing action. Then we have: 
    \begin{align*}
        L^{\infty}(W \times W')^{G} = L^{\infty}(W)^{G} \otimes 1
    \end{align*}
\end{theorem}

In the next sections we shall often use the notion of induced actions. We define the notion here and give some examples: 

\begin{definition}
    \label{Def: induced actions}
    Let $G \curvearrowright (W,\rho)$ be a nonsingular action of a countable discrete group. Then $G \curvearrowright (W,\rho)$ is said to be \textit{induced} from $H \curvearrowright (W_{0},\rho)$ for a non-null Borel subset $W_{0} \subset W$  and subgroup $H < G$ if $W_{0}$ is $H$-invariant and $\{g \cdot W_{0} \; | \; g \in G/ H\}$ forms a Borel partition of $W$. Sometimes, depending on the context, we will say that $G \curvearrowright (W,\rho)$ is induced from $H$, which means there exists such a Borel subset $(W_{0},\rho)$. Equivalently $G \curvearrowright (W,\rho)$ is induced from $H$ if there is a $G$-equivariant Borel map $W \rightarrow G/ H$.     
\end{definition}

Notice that every action is trivially induced from itself. If $G \curvearrowright (W,\rho)$ admits a fundamental domain $E$, then it is induced from $\{e\} \curvearrowright (E,\rho)$. Moreover, if $H < G$ is a subgroup, then the translation action $G \curvearrowright G$ is induced from $H \curvearrowright H$. Now we introduce the notion of mixing and weakly mixing actions, which are in some sense, stronger forms of ergodicity. 

\begin{definition}
    \label{Def: mixing actions}
    Let $G \curvearrowright (W,\rho)$ be a p.m.p. action of a discrete countable group on a standard measure space. The action is called \textit{mixing} if for all Borel subsets $A,B \subset W$, we have that: 
    \begin{align*}
        \lim_{g \rightarrow +\infty} \rho(gA \cap B) = \rho(A) \rho(B)
    \end{align*}
\end{definition}
It can be checked that for a mixing action $G \curvearrowright (W,\rho)$, if $H$ is an infinite subgroup of $G$, then the action $H \curvearrowright (W,\rho)$ is also mixing. 

\begin{definition}
    \label{Def: weakly mixing action}
    Let $G \curvearrowright (W,\rho)$ be a nonsingular action of a countable discrete group on a standard measure space. The action is called \textit{weakly mixing} if for every ergodic p.m.p. action $G \curvearrowright (W',\rho')$, the diagonal action $G \curvearrowright (W \times W', \rho \times \rho')$ given by $g \cdot (x,y) = (gx,gy)$ is again ergodic. 
\end{definition}

It is clear almost from the definitions that both mixing and weakly mixing imply ergodicity. One can also check (see \cite[Proposition 2.2.11]{PetersonLectureNotes11}) that a mixing action is weakly mixing. Another important property of group actions that we will use in the following sections is the notion of malleability and strong malleability of actions, introduced by Popa in a series of articles \cite{Popa06}, \cite{Popa06StrongRigidity1}, \cite{Popa06StrongRigidity2}, \cite{Popa07}. We only define such actions and discuss an example briefly. For more details, we refer the reader to \cite{Popa07}. 

\begin{definition}
    \label{Def: strongly malleable action}
    Let $G$ be a discrete countable group and let $G \curvearrowright (W, \rho)$ be a measure preserving action. The action is called \textit{s-malleable} if there exists a one-parameter group $(\alpha_{t})_{t \in \mathbb{R}}$ of measure preserving transformations in $\Aut(W \times W)$ and a nonsingular transformation $\beta = \beta^{-1}$ in $\Aut(W \times W)$ such that the following hold:
\begin{enumerate}
\item The maps $\alpha_{t}$ for all $t \in \mathbb{R}$ and $\beta$ commute with the diagonal action $G \curvearrowright W \times W$,

\item For a.e. $(x,y) \in W \times W$, we have $\alpha_{1}(x,y) \in \{y\} \times W$ and $\beta(x,y) \in \{x\} \times W$,  

\item For all $t \in \mathbb{R}$, we have $\alpha_{t} \circ \beta = \beta \circ \alpha_{-t}$.  
\end{enumerate}
\end{definition}

As we mentioned earlier, we shall use most of the notions introduced in this section in the context of countable discrete groups. A major source of examples of s-malleable actions come from generalized Bernoulli actions. We introduce these actions here, as in \cite[Example 4.4]{Popa07}. We shall use such generalized Bernoulli actions later and for a detailed treatment, we refer the reader to \cite{PopaVaes08}. 

We now look at a class of examples that are of central importance in this article. Let $(W_{0},\rho_{0})$ be a standard measure space and let $G \curvearrowright K$ be a countable discrete group acting on a countable set. Let $(W,\rho) = \Pi_{k \in K}(W_{0},\rho_{0})_{k}$ denote the countable product and let $\sigma: G \curvearrowright (W,\rho)$ be given by: 
    \begin{align*}
        \sigma(g)((x_{k})_{k}) = (x_{g^{-1}k})_{k}
    \end{align*}
Such actions are called \textit{generalized Bernoulli actions}. The space $(W_{0},\rho_{0})$ is often called the \textit{base space} of the action. When $K = G$ and the action $G \curvearrowright K$ is given by left translation, it is simply called a \textit{Bernoulli action}. It was shown in \cite{Popa06} and \cite{Popa06StrongRigidity1} that if the base space $(W_{0},\rho_{0})$ is non-atomic, then such Bernoulli actions are s-malleable. In \cite{Popa06StrongRigidity1} and \cite{Popa06StrongRigidity2}, it was shown that for a class of groups called $w$-rigid groups, Bernoulli actions are mixing. However in general, generalized Bernoulli actions as above do not have the mixing property and are only weakly mixing (see \cite[Proposition 2.3]{PopaVaes08}).

\subsection{Type III$_{1}$ actions and von Neumann algebras}

In this section we give a brief review of the theory of type III actions and factors. A lot of these results appear in \cite{Connes-Takesaki_1977} and for an elaborate treatment, we refer the reader to any standard text on type III von Neumann algebras (for example \cite{SunderAnInvitation}). Since we are interested in von Neumann algebras with separable preduals, we can always assume the existence of faithful normal states. Recall that a factor $M$ is said to be of type III if it has no nontrivial finite projections. Given a faithful normal state $\phi$ on $M$, one has from Tomita-Takesaki theory a one-parameter group of automorphisms $(\sigma^{\phi}_{t})_{t \in \mathbb{R}}$ of $M$ called the \textit{modular automorphism group}. The modular automorphism group does not depend on the choice of a faithful normal state up to a one-parameter group of unitaries (\cite[Theorem 3.1.1]{SunderAnInvitation}), thanks to the Connes 2-cocycle derivative. The crossed product $\widetilde{M}$ is called the \textit{continuous core} of $M$ and is hence isomorphic for any choice of faithful normal state. A type III factor is said to be of type III$_{1}$ if $\widetilde{M}$ is a factor. 

Recall that the group of all *-isomorphisms of $M$ is denoted by $\Aut(M)$ and is called the \textit{automorphism group of $M$}. With the topology of pointwise norm convergence in $M_{*}$,  $\Aut(M)$ is a Polish group (see \cite{Haagerup75} for more details). The subgroup of all inner automorphisms denoted by $\Inn(M)$ is not necessarily a closed subgroup of $\Aut(M)$. If $\Inn(M)$ is closed in $\Aut(M)$, the factor is called \textit{full}. In case of full factors, the quotient group $\Out(M)$ which is called the \textit{outer automorphism groups} is also a Polish group. Hence for full factors, we have a canonical Polish group homomorphism $\delta: \mathbb{R} \rightarrow \Out(M)$, again thanks to the Connes 2-cocycle theorem. One can check that if $M$ is a factor of type III$_{1}$, then the modular automorphisms are necessarily outer. As a consequence the modular homomorphism $\delta: \mathbb{R} \rightarrow \Out(M)$ is injective. For more details on this topic we refer the reader to \cite[Section V]{Connes74}. 

An important class of examples of von Neumann factors come from ergodic countable Borel equivalence relations (\cite{MR578656} and \cite{MR578730}). Such equivalence relations often arise as orbit equivalence relations of free ergodic actions of countable discrete groups. Now suppose that $G$ is a countable discrete group and $G \actson (W,\rho)$ is a free, ergodic (non-singular) action on a standard probability space and let $\cS$ be the orbit equivalence relation. Then the action is said to be of type III when there is no equivalent finite or infinite measure $\rho$ such that the action is $\rho$-preserving. As one would expect, in such a situation the crossed product von Neumann algebra $L^{\infty}(W,\rho) \rtimes G = L(\cS)$ is a type III factor.  

Given a type III free ergodic group action $G \actson (W,\rho)$, a \textit{1-cocycle} for the action with values in a group $K$ is a measurable map $c: G \times W \rightarrow K$ satisfying $c(h,gx)c(g,x) = c(hg,x)$ for a.e. $x \in W$ and all $g \in G$. Two such 1-cocycles are called \textit{cohomologous} if there is \textit{a coboundary}, i.e., a measurable map $f: W \rightarrow H$ such that $c(g,x) = f(x)d(g,x)f(x)^{-1}$ for a.e. $x \in W$ and all $g \in G$. One can similarly define the notion of 1-cocycles for countable Borel equivalence relations and for orbit equivalence relations, they coincide. The group of 1-cocycles is denoted by $Z^{1}(\cS, K)$ and the subgroup of 1-cocycle is denoted by $B^{1}(\cS,K)$. The quotient is called the \textit{1-cohomology} group and is denoted by $H^{1}(\cS,K)$. In this article we shall consider such 1-cohomology groups of equivalence relations with $K = \mathbb{T}$, the circle group. In \cite{Moore76}, it is shown that with respect to the topology of pointwise convergence in measure, $H^{1}(\cS,\mathbb{T})$ turns into a Polish group. 

Recall that for a countable Borel equivalence relation $\cS$ on $(W,\rho)$, there are two $\sigma$-finite Borel measures $\rho_{1}$ and $\rho_{2}$ on $\cS$ as follows: 
\begin{align*}
    \rho_{1}(E) &= \int_{W} \#\{y \in W \; | \; (x,y) \in E\} \; d\rho(x) \\
    \rho_{2}(E) &= \int_{W} \#\{y \in W \; | \; (y,x) \in E\} \; d\rho(x)
\end{align*}

By definition the equivalence relation $\cS$ is non-singular if one has the $\rho_{1}$ and $\rho_{2}$ are equivalent measures (have the same null sets). Then the 1-cocycle $D: \cS \rightarrow \mathbb{R}$ given by $(x,y) \mapsto \log \frac{d\rho_{1}}{d \rho_{2}}(x,y)$ is called \textit{the logarithm of the Radon-Nikodym 1-cocyle}. Now let $\gamma$ be the Borel measure on $\mathbb{R}$ given by $d \gamma(t) = e^{-t} \; d \lambda(t)$ where $\lambda$ is the usual Lebesgue measure. Then the \textit{Maharam extension} of $\cS$ is the countable Borel equivalence relation $c(\cS)$ on the standard probability space $(W \times \mathbb{R}, \rho \times \gamma)$ given by: 
\begin{align*}
    ((x,s),(y,t)) \in c(\cS) \iff (x,y) \in \cS \text{ and } s = t + D(x,y)
\end{align*}
The Maharam extension preserves the infinite measure $\rho \times \lambda$ which is equivalent to the probability measure $\rho \times \gamma$. An equivalence relation $\cS$ of type III is said to be of type III$_{1}$ if the Maharam extension $c(R)$ is ergodic. In this case the Mahram extension is an ergodic equivalence relation of type II$_\infty$. It can be shown 
(see \cite[Theorem 1.6.20]{lirias3869720}) that the associated von Neumann algebra $L(c(\cS))$ is isomorphic to the continuous core of $L(\cS)$. We note that all these notions can be defined similarly for free ergodic group actions, and in this article we will go back and forth between these notions for group actions and their associated equivalence relations. 

Similar to the context of von Neumann algebras, one can define the automorphism groups of equivalence relations (see \cite{ConnesKrieger77}). By $\Aut(\cS)$, we mean the Polish group of all automorphisms, i.e. all nonsingular measure space automorphisms $\theta \in \Aut(W)$ such that $(x,y) \in \cS \iff (\theta x, \theta y) \in \cS$. Such automorphisms are often also called \textit{orbit equivalences}. Every automorphism  $\theta \in \Aut(\cS)$ of a type III equivalence relation induces an automorphism $\widetilde{\theta} \in \Aut(c(\cS))$ of the Maharam extension given by: 
\begin{align*}
    \widetilde{\theta} (x , s) = (\theta x, \omega(x) + s) 
\end{align*}
where $\omega(x) = \frac{d \theta_{*} \rho}{ d \rho} (x)$ for the Radon Nikodym derivative. Notice that $\widetilde{\theta}$ preserves the infinite measure $\rho \times \lambda$ where $\lambda$ is the Lebesgue measure on $\mathbb{R}$. 

The subgroup of all automorphisms $\theta$ such that $(x, \theta x) \in \cS$ for a.e. $x \in W$ is called the \textit{full group} of $\cS$ and is denoted by $[\cS]$. If $[\cS]$ is closed in $\Aut(\cS)$, then the quotient $\Out(\cS)$ is a Polish topological group. It is easy to see that every such an automorphism of $\cS$ induces an automorphism of $L(\cS)$ preserving the Cartan subalgebra $L^{\infty}(W,\rho)$. Moreover every element $c \in H^{1}(\cS,\mathbb{T})$ gives rise to an automorphism in $\Aut(L(\cS))$, in fact $H^{1}(\cS,\mathbb{T})$ corresponds to the automorphisms of $\Aut(\cS)$ that fix every element of $L^{\infty}(W,\rho)$ pointwise.

We refer the reader to \cite{MR578730} for a detailed treatment of the interplay between $\Aut(\cS)$ and $\Aut(L(\cS))$ and we recall the main result that we will use from \cite[Theorems 3 and 4]{MR578730}. Let $\cS$ be as above and $A \subset M$ denote the Cartan inclusion $L^{\infty}(W,\rho) \subset L(\cS)$ and assume that $L(\cS)$ is a full factor. Let us denote by $\Out(A \subset M)$
the Polish group of automorphisms preserving the Cartan subalgebra $A$. Then $\Out(A \subset M)$ is isomorphic and homeomorphic as Polish groups to $\Out(\cS) \ltimes H^{1}(\cS,\mathbb{T})$, where the action is given by $\theta \cdot c (x,y) = c(\theta^{-1} x, \theta^{-1} y)$.

\section{Construction}
\label{construction}

\begin{definition}
\label{Const: II infinity action}
Let $\Gamma_{1}$, $\Sigma$,  $\Lambda$, $\Gamma$ and $\pi$ be as in Definition \ref{Const: Sigma, Lambda and Gamma}. Let $X_{0} = \{0,1\}$ and $\mu_{0}$ be a probability measure on $X_{0}$ with $\mu_{0}(\{0\}) \neq \mu_{0}(\{1\})$. Let $(X, \mu) = (X_{0}, \mu_{0})^{\Gamma/H}$ and $\Gamma \actson (X,\mu)$ be the generalized Bernoulli action where $H$ is the subgroup of $\Gamma$ constructed in Definition \ref{Def: subgroup H}. We shall usually denote by $\lambda$ the Lebesgue measure on any Euclidean space, in this case on $\mathbb{R}^{6}$. Let $Y = \mathbb{R}^{6}$ and let $\Gamma_{1} \curvearrowright (Y,\lambda)$ be the $\lambda$-preserving translation action. Now let $(Z, \eta) = (X \times Y, \mu \times \lambda)$ and let $\alpha$ be the measure preserving action of $\Gamma$ on $Z$ given by $g \cdot (x,y) = (gx, \pi(g)y)$. We let $\mathcal{R}$ be the type $\textrm{II}_{\infty}$ orbit equivalence relation of the action $\alpha$.  
\end{definition}

\begin{definition}
\label{Const: III_1 action}
Let $Y_{1} = \mathbb{R}^{6}/ \mathbb{R}^{*}_{+}$ and consider the translation action of $\SL(6,\mathbb{Z})$ on $Y_{1}$. As in the proof of Lemma \ref{Lemma: type III1 and maharam ergodic}, there is a standard probability measure $\nu$ on $Y_{1}$ such that the translation action of $\SL(6,\mathbb{Z})$ on $Y_{1}$ is nonsingular. Now we let $\alpha_{1}$ be the action of $\Gamma$ on $(Z_{1}, \eta_{1}) = (X \times Y_{1}, \mu \times \nu)$ given by $g \cdot (x,y) = (gx, \pi(g)y)$. We let $\mathcal{R}_{1}$ be the orbit equivalence relation of the action $\alpha_{1}$. 
\end{definition}

It is possible to identify the action $\alpha_{1}$ with a different action to make computations easier. This has been done in \cite[Section 5]{lirias3869720}, and we summarize the results in the following lemma. 

\begin{lemma}
\label{Lemma: type III1 and maharam ergodic}
    The equivalence relation $\mathcal{R}_{1}$ as in Definition \ref{Const: III_1 action} is ergodic, of type $\textrm{III}_{1}$ and the Maharam extension of $\mathcal{R}_{1}$ is isomorphic to $\mathcal{R}$ as in Definition \ref{Const: II infinity action}.  
\end{lemma}

\begin{proof}
    We first identify $\mathbb{R}^{*}_{+}$ with $\mathbb{R}$ and consider the action of $\SL(6, \mathbb{Z}) \times \mathbb{R}$ on $\mathbb{R}^{6}$ given by:
    \begin{equation}
        (g,t) \cdot y  = (e^{-t/6}g)y
    \end{equation}
    where the $e^{-t/6}g$ acts on a vector $y$ by matrix multiplication. Since the determinant of $g$ is 1, we have that $\det(e^{-t/6}g) = e^{-t}$. Hence the $\mathbb{R}$ action scales the measure and the $\SL(6,\mathbb{Z})$ action is measure-preserving. By \cite[Proposition 4.1.14]{lirias3869720}, the quotient $\mathbb{R}^{6}/ \mathbb{R}$ can be equipped with a measure with respect to which the action $\SL(6,\mathbb{Z}) \curvearrowright \mathbb{R}^{6}/ \mathbb{R}$ is nonsingular. Now we can identify the action $\SL(6,\mathbb{Z}) \curvearrowright \mathbb{R}^{6}/\mathbb{R}^{*}_{+}$  with $\SL(6,\mathbb{Z}) \curvearrowright \mathbb{R}^{6}/ \mathbb{R}$. Once again by \cite[Proposition 4.1.14] {lirias3869720}, the Maharam extension of this action can be identified with the translation action $\SL(6,\mathbb{Z}) \curvearrowright \mathbb{R}^{6}$. Since the generalized Bernoulli action in Definition \ref{Const: III_1 action} is p.m.p., we have that the Maharam extension of $\mathcal{R}_{1}$ is $\mathcal{R}$. In particular since $\mathcal{R}$ is ergodic by Lemma \ref{Lemma: four fold diagonal action ergodic}, we have that $\mathcal{R}_{1}$ is of type $\textrm{III}_{1}$. From the identification above, the ergodicity of $\mathcal{R}$ implies that also $\mathcal{R}_{1}$ is ergodic and that concludes the proof.  
\end{proof}

\section{$\mathcal{U}_{\fin}$-cocycle superrigidity}

To compute the outer automorphism group that we want, the first step is to show that the constructed action is cocycle superrigid, which we define now. For cocycle superrigidity results, one often considers the class of $\cU_{\fin}$ groups introduced by Popa. A Polish group $G$ is said to be \textit{of finite type} or \textit{a $ \cU_{\fin}$ group} if it arises as a closed subgroup of the unitary group of a II$_{1}$ factor with separable predual. For example every discrete countable group and every compact second countable group is $\cU_{\fin}$.  

\begin{definition}
\label{Def Cocycle Superrigid}
    A non-singular essentially free ergodic action on a standard measure space $G \curvearrowright (E, \rho)$ is called \textit{cocycle superrigid with target group $K$} if any 1-cocycle $\omega: G \times E \rightarrow K$ is cohomologous to a cocycle $(g,x) \mapsto \delta(g)$ for a.e. $x \in E$ where $\delta: G \rightarrow K$ is a group homomorphism. We say that such an action is $\mathcal{U}_{\fin}$-cocycle superrigid if it is cocycle superrigid with any $\cU_{\fin}$ target group $K$. 
\end{definition}

Over the years, a lot of cocycle superrigidity results have appeared in the literature. In \cite{Popa07}, Popa showed that all Bernoulli actions of property (T) groups are $\cU_{\fin}$-cocycle superrigid. In \cite{MR2783933}, Ioana showed that profinite actions of any property (T) group are cocycle superrigid with discrete countable target groups. In this article we deal with actions that are not probability measure preserving. Hence for such cocycle superrigidity results, the notion of property (T) for groups has to be replaced by Zimmer's notion of property (T) for actions. If the action of a property (T) group is probability measure preserving, then the action it is automatically property (T), but for nonsingular actions this is no longer the case. 

\begin{definition}
    \label{Def: 1 cocycles and invariant vectors}
    Let $G \actson (E,\rho)$ be a nonsingular free ergodic action of a discrete countable group and $\cS$ be the orbit equivalence relation. Let $\cK$ be a separable Hilbert space and $c : \cS \rightarrow \cU(\cK)$ be a 1-cocycle. A \textit{invariant unit vector for $c$} is a Borel map $\xi: E \rightarrow \cK$ satisfying $\xi(x) = c(x,y)\xi(y)$ for a.e. $(x,y) \in \cS$ and $\|\xi(x)\| = 1$ for a.e. $x \in E$. Similarly a \textit{sequence of almost invariant unit vectors for $c$} is a sequence of Borel maps $\xi_{n} : X \rightarrow \cU(\cK)$ satisfying: 
    \begin{align*}
       \| \xi_{n}(x) - c(x,y)\xi_{n}(y) \| \rightarrow 0 \text{ for a.e. } (x,y) \in \cS
    \end{align*}
    and $\|\xi_{n}(x)\| = 1$ for all $n$, for a.e. $x \in E$. We say that $\cS$ has \textit{property (T)} if every 1-cocycle admitting a sequence of almost invariant unit vectors admits an invariant unit vector. 
\end{definition}

\begin{lemma}
\label{Lemma: four fold diagonal action ergodic}
    Let $(Z, \eta) = (X \times Y, \mu \times \nu)$ and $\Gamma \curvearrowright (Z, \eta)$ be as in Definition \ref{Const: II infinity action}. Then the four-fold diagonal action $\Gamma \curvearrowright Z \times Z \times Z \times Z$ is ergodic.
\end{lemma}

\begin{proof}
    We first notice that for an element $g \in \Gamma$, the subgroup $g\Sigma g^{-1}$ does not intersect $H$ except at $\{\pm I\}$. Therefore the action $\Sigma \curvearrowright X$ is mixing. Since the diagonal product of mixing actions is still mixing, we have that $\Sigma \curvearrowright X \times X \times X \times X$ is still mixing. By \cite[Lemma 5.6]{PopaVaes11}, we have that $\Sigma \curvearrowright Y \times Y \times Y \times Y$ is ergodic, and hence it is a conservative nonsingular action. By Theorem \ref{Thm: Recurrent and mixing means ergodic}, we have that $\Sigma \curvearrowright Z \times Z \times Z \times Z$ is ergodic and hence the four-fold diagonal action of $\Gamma$ is also ergodic.  
\end{proof}

\begin{lemma}
\label{Lemma: action is cocycle superrigid}
    The action $\Gamma \curvearrowright (Z, \eta) = (X \times Y, \mu \times \lambda)$ as in Definition \ref{Const: II infinity action} is $\mathcal{U}_{\fin}$-cocycle superrigid. 
\end{lemma}

\begin{proof}
Let $G$ be a group in Popa's class $\mathcal{U}_{\fin}$ and let $\omega: \Gamma \times (X \times Y) \rightarrow G$ be a 1-cocycle. We first consider the restriction of $\omega$ to $\SL(6,\mathbb{Z}) \times (X \times Y)$ and show that the restriction is cohomologous to a group homomorphism, essentially using \cite[Theorem 5.3]{PopaVaes11}. We know that generalized Bernoulli actions are s-malleable and as in the proof of \cite[Theorem 1.3]{PopaVaes11}, $\SL(6,\mathbb{Z}) \curvearrowright \mathbb{R}^{6}$ is s-malleable. Hence the action $\SL(6,\mathbb{Z}) \curvearrowright X \times Y$ is s-malleable. We claim that the diagonal action $\Gamma \curvearrowright X \times Y \times X \times Y$ has property (T). 

We show this in the same way as in Step 1 of the proof of \cite[Theorem 21]{MR3048005}. Consider the action $\Gamma_1 \curvearrowright \mathbb{R}^{6}$. Denoting by $e_{1},e_{2},...,e_{6}$ the basis vectors of $\mathbb{R}^{6}$, notice that the orbit of $(e_{1},e_{2})$ in the diagonal action $\Gamma_1 \curvearrowright Y \times Y$ has a measure zero complement. Let us denote by $H_{0}$ the stabilizer of the point $(e_{1},e_{2})$ under the $\Gamma_1$ action. Then we can identify the action $\Gamma_1 \curvearrowright (X \times Y \times X \times Y)$ with the action $\Gamma_1 \curvearrowright \SL(6,\mathbb{R})/H_{0} \times X \times X$. By \cite[Proposition 3.3]{PopaVaes11}, $\Gamma_1 \curvearrowright X \times Y \times X \times Y$ has property (T) if and only if $H_{0} \curvearrowright \SL(6,\mathbb{R}) / \Gamma_1 \times X \times X$ has property (T). Since this is a p.m.p free ergodic action, this has property (T) as $H_{0}$ has property (T) by \cite[Proposition 3.1]{PopaVaes11}. By Lemma \ref{Lemma: four fold diagonal action ergodic}, all the hypotheses of \cite[Theorem 5.3]{PopaVaes11} are satisfied and using the theorem, we have that $\omega|_{\Gamma_1}$ is cohomologous to a group homomorphism.   

As in the proof of Lemma \ref{Lemma: four fold diagonal action ergodic}, the diagonal action $\Sigma \curvearrowright X \times Y \times X \times Y$ is ergodic. Since $\Sigma$ commutes with $\Lambda$, we have that $\omega$ is a group homomorphism on $\Lambda$ by \cite[Lemma 5.5]{PopaVaes11}. Since $\Gamma$ is generated by $\Gamma_1$ and $\Lambda$, we have that $\omega$ is given by a group homomorphism on $\Gamma$, and this concludes the proof. 
\end{proof}

In the final proposition of this section, we shall compute the cohomology group $H^{1}(\mathcal{R}_{1}, \mathbb{T})$, i.e., the group of 1-cocycles of $\mathcal{R}_{1}$ with values in the circle up to coboundaries. 

\begin{proposition} 
\label{Proposition: calculation of 1 cocycle group}
Let $\mathcal{R}_{1}$ be as in Definition \ref{Const: III_1 action}. Then $H^{1}(\mathcal{R}_{1},\mathbb{T}) = \mathbb{R}$ 
\end{proposition}
\begin{proof}
    It is a known fact that $\Gamma_{1} = \SL(6,\mathbb{Z})$ and $\Lambda = \SL(3,\mathbb{Z})$ are both equal to their commutator subgroups and are hence perfect. Hence we have that $\Gamma$ is also perfect and there are no nontrivial group homomorphism from $\Gamma$ to $\mathbb{T}$. Now suppose $\Omega: \Gamma \times Z_{1} \rightarrow \mathbb{T}$ is a 1-cocycle. Since by Lemma \ref{Lemma: four fold diagonal action ergodic}, the diagonal action $\Gamma \curvearrowright Z_{1} \times Z_{1}$ is ergodic and by Lemma \ref{Lemma: action is cocycle superrigid}, the Maharam extension $\mathcal{R}$ is cocycle superrigid with target $\mathbb{T}$, we note that all the hypotheses of \cite[Theorem 5.3.1]{lirias3869720} are satisfied. Now by \cite[Theorem 5.3.1]{lirias3869720}, there exists a group homomorphism $\gamma: \mathbb{R} \rightarrow \mathbb{T}$ such that $\Omega$ is  cohomologous to the 1-cocycle: 
    \begin{equation}
       \Gamma \times Z_{1} \ni (g,z) \mapsto \gamma(-\omega(g,z)) \in \mathbb{T}
    \end{equation}
    where $\omega$ is the logarithm of the Radon Nikodym 1-cocycle. Since the group of homomorphisms from $\mathbb{R}$ to $\mathbb{T}$ is $\mathbb{R}$ and since each such homomorphism defines a 1-cocycle as above, we conclude that $H^{1}(\mathcal{R}_{1},\mathbb{T}) = \mathbb{R}$.
\end{proof}

\section{Computation of the outer automorphism group}
We begin this section with the following lemma to show that the action restricted to non-trivial normal subgroups of $\Gamma$ cannot be dissipative. 

\begin{lemma}
\label{NoNontrivialDissipativeActions}
    If $N \triangleleft \Gamma$ is a normal subgroup and $N \curvearrowright \mathbb{R}^{6} \times (X_{0},\mu_{0})^{\Gamma/H}$ is dissipative, then $N = \{I\}$ or $N = \{\pm I\}$. 
\end{lemma}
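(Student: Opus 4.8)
The plan is to prove the contrapositive: if $N \triangleleft \Gamma$ is \emph{not} contained in $\{\pm I\}$, then the (infinite measure preserving) action of $N$ on $Z = \mathbb{R}^6 \times (X_0,\mu_0)^{\Gamma/H}$ is conservative, and since the total measure is nonzero this means $\mu(C)=\mu(Z)>0$, so the action is not dissipative in the sense of Definition \ref{conservative and dissipative}. The first reduction is that any such $N$ is infinite: it contains some $g \neq \pm I$, which has an infinite conjugacy class by Lemma \ref{infinite conjugacy class}, and normality forces $N$ to contain this whole class. So it suffices to show that the diagonal action of an arbitrary infinite $N$ on $Z$ is conservative (the restriction to $N$ is essentially free, as the full $\Gamma$-action is).

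Since the action is measure preserving, I would invoke the Hopf recurrence criterion: a measure-preserving action of a countable group on a $\sigma$-finite space is conservative if and only if there exists $F \in L^1$ with $F>0$ a.e.\ and $\sum_{n} F(n\cdot\omega)=\infty$ for a.e.\ $\omega$. Writing $X = (X_0,\mu_0)^{\Gamma/H}$, I pick any $f \in L^1(\mathbb{R}^6)$ with $f>0$ a.e.\ and set $F = f \otimes \mathbf{1}_{X}$; this lies in $L^1(Z)$ precisely because $\mu$ is a probability measure, and $F>0$ a.e. Because $\mathbf{1}_X$ is $\Gamma$-invariant and $n\cdot(y,x)=(\pi(n)y,nx)$, grouping the sum according to the value $h=\pi(n)$ (each fibre of $\pi|_N$ having cardinality $|N_0|$, where $N_0 = N\cap\ker\pi$) gives, for a.e.\ $(y,x)$,
\[
\sum_{n\in N} F\bigl(n\cdot(y,x)\bigr) \;=\; \sum_{n\in N} f(\pi(n)y) \;=\; |N_0|\sum_{h\in\pi(N)} f(hy).
\]
Thus everything reduces to showing this quantity is a.e.\ infinite, which I would do by splitting on whether $N_0$ is infinite or finite.

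If $N_0$ is infinite, then $|N_0|=\infty$ while $\sum_{h\in\pi(N)} f(hy)\ge f(y)>0$ a.e., so the product is infinite. (This case absorbs the possibility $\pi(N)\subseteq\{\pm I\}$, since then $N_0$ has index at most $2$ in the infinite group $N$.) If $N_0$ is finite, then $\pi(N)\cong N/N_0$ is infinite; as $\pi$ is surjective onto $\SL(6,\mathbb{Z})$ and $N$ is normal, $\pi(N)$ is an infinite normal subgroup of $\SL(6,\mathbb{Z})$. By the normal subgroup theorem for $\SL(6,\mathbb{Z})$ (the higher-rank case $6\ge 3$, classical by Bass--Milnor--Serre and Margulis), an infinite normal subgroup has finite index, so $\pi(N)$ is a lattice in $\SL(6,\mathbb{R})$. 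The stabiliser of a nonzero vector is noncompact, so by Moore's ergodicity theorem $\pi(N)$ acts ergodically on $\mathbb{R}^6\setminus\{0\}$, hence on $\mathbb{R}^6$; an ergodic measure-preserving action of an infinite group on a nonatomic space is conservative (otherwise it would be totally dissipative with a fundamental domain, forcing the space to be a single countable orbit), so $\sum_{h\in\pi(N)} f(hy)=\infty$ a.e. In both subcases the displayed sum is a.e.\ infinite, so $N\curvearrowright Z$ is conservative, completing the contrapositive.

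The main obstacle is the second subcase: extracting conservativity of $\pi(N)\curvearrowright\mathbb{R}^6$. This is where the structure of $\Gamma$ is really used, via the normal subgroup theorem to force finite index and Moore's ergodicity theorem to upgrade to ergodicity (and hence conservativity on the nonatomic space $\mathbb{R}^6$); a single infinite-index cyclic subgroup generated by a hyperbolic matrix acts dissipatively on $\mathbb{R}^6$, so the normality of $N$ is essential and cannot be dropped. One should also take care to state the Hopf criterion for general countable group actions and to verify $F\in L^1$ using that $\mu$ is a probability measure. Finally, the characterization is sharp: $\{\pm I\}$ acts through a finite group, and finite-group actions always admit a fundamental domain, so they are genuinely dissipative.
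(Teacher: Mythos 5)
Your proof is correct, but it takes a genuinely different route from the paper's. The paper argues directly from dissipativity to a group-theoretic constraint: it first shows that $N_{0} = N \cap \ker(\pi)$ must be finite (an essentially free pmp action of an infinite group is recurrent, and after replacing Lebesgue measure on $\mathbb{R}^{6}$ by an equivalent probability measure this recurrence would contradict dissipativity), then uses Lemma \ref{infinite conjugacy class} together with $\pi(-I)=-I$ to conclude $N_{0}=\{I\}$, so that $\pi|_{N}$ is injective; since $[h,k]\in N\cap\ker(\pi)$ for $h\in\Lambda$ and $k\in N$, the subgroup $N$ centralizes $\Lambda$ and hence lies in $\Sigma$, and normality plus Lemma \ref{NonNormality1} forces $N=\bigcap_{i}g_{i}^{-1}Ng_{i}\subseteq\bigcap_{i}g_{i}^{-1}\widetilde{\Sigma}g_{i}\subseteq\{\pm I\}$. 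You instead prove the contrapositive dynamically: after reducing, via the Hopf criterion and the fibration over $\pi$, to conservativity of $\pi(N)\curvearrowright\mathbb{R}^{6}$, you handle the essential case ($N_{0}$ finite) by invoking the Margulis/Bass--Milnor--Serre normal subgroup theorem to make $\pi(N)$ of finite index and Moore's ergodicity theorem to get ergodicity, hence conservativity, on $\mathbb{R}^{6}$. Your argument is more conceptual and would work for any group surjecting onto a higher-rank lattice with the kernel acting by a pmp action, but it imports two heavy theorems that the paper avoids for this lemma; the paper's argument is elementary relative to its own preparatory lemmas and reuses exactly the combinatorial control over conjugates of $\widetilde{\Sigma}$ that drives the rest of the paper. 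In writing yours up, be explicit that you use both directions of the Hopf criterion (divergence of the orbital sums of one strictly positive $F\in L^{1}$ implies conservativity, and conversely conservativity implies divergence for every such $F$); both are standard for measure-preserving actions of countable groups, but they are distinct statements.
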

\begin{proof}
    Consider the normal subgroup $N_{0} = N \cap \ker(\pi)$. Thus $N_{0}$ acts only on the second component $N_{0} \curvearrowright (X,\mu) = (X_{0}, \mu_{0})^{\Gamma/H}$. Any essentially free pmp action of an infinite group is recurrent. We can choose a probability measure on $\mathbb{R}^{6}$ which is equivalent to the Lebesgue measure and note that the action $N_{0} \curvearrowright \mathbb{R}^{6} \times (X_{0},\mu_{0})^{\Gamma/H}$ is recurrent if $N_{0}$ is infinite. Since the action is dissipative, this gives a contradiction and we get that $N_{0}$ is finite. By Lemma \ref{Lemma: infinite conjugacy class}, we have that $N_{0} \subseteq \{\pm I\}$. However since $\pi(-I) = -I$, we have that $N_{0} = \{I\}$. Hence $\pi|_{N}: N \rightarrow \SL(6,\mathbb{Z})$ is injective. For an element $h \in \SL(3,\mathbb{Z})$ and $k \in N$, we have that $\pi(hkh^{-1}k^{-1}) = 0$ and hence by injectivity we have $hk = kh$. Thus $N$ commutes with $\SL(3,\mathbb{Z})$. Therefore we have that $N \subset \Sigma \subset \SL(6,\mathbb{Z})$. By normality we have that for all $i$, $g_{i}^{-1}Ng_{i} = N$. Hence by Lemma \ref{Lemma: Sigma is non-normal} we have: 
    \begin{equation}
        N = \bigcap_{i}g_{i}^{-1}Ng_{i} \subseteq \bigcap_{i}g_{i}^{-1}\widetilde{\Sigma}g_{i} \subseteq \{\pm I\} 
    \end{equation}
    and that concludes the proof. 
\end{proof}

Cocycle superrigidity often leads to orbit equivalence superrigidity results. The first step in this direction in the p.m.p setting was taken by Zimmer in \cite[Proposition 2.4]{Zimmer20} and then generalized by Popa in \cite[Theorem 5.6]{MR2342637}. The most general result in the setting of nonsingular actions of locally compact groups was proven by Popa and Vaes in \cite[Lemma 5.10]{PopaVaes11}. Such results are usually proven by applying cocycle superrigidity to the so called Zimmer cocycle of an automorphism. We describe this notion here and then prove the following simplified version of cocycle superrigidity $\implies$ OE-superrigidity result which is essentially proven in \cite[Lemma 2.4]{DrimbeVaes23}. 

\begin{definition}
    \label{Def: Zimmer cocycle}
    Let $G \actson (E,\rho)$ and $K \actson (F, \sigma)$ be essentially free ergodic nonsingular actions of discrete countable groups on standard measure spaces, and let $\Delta: E \rightarrow F$ be an orbit equivalence ($\Delta(G \cdot x) = K \cdot \Delta(x)$ for all $x \in E$. Then the \textit{Zimmer cocycle} of $\Delta$, denoted by $Z_{\Delta}$ is the measurable map $Z_{\Delta}: G \times E \rightarrow K$ given by: 
    \begin{align*}
        (g,x) \mapsto k \text{ where } \Delta(gx) = k \Delta(x)
    \end{align*}
    One can check easily that this indeed defines a 1 cocycle.
\end{definition}

\begin{lemma}
\label{Lemma: cocycle SR to OE SR}
    Let $G \curvearrowright (E,\rho)$ and $K \curvearrowright (F, \sigma)$ be nonsingular essentially free ergodic actions and $\Delta: E \rightarrow F$ be an orbit equivalence. Suppose the associated Zimmer 1-cocycle is cohomologous to a group homomorphism $G \rightarrow K$. Then there exists: 
    \begin{enumerate}
        \item A normal subgroup $N \triangleleft G$ such that the action $N \curvearrowright E$ admits a fundamental domain. 
        \item A subgroup $K_{0} < K$ and a non-null subset $F_{0} \subset F$ such that $F_{0}$ is $K_{0}$-invariant and $K \curvearrowright F$ is induced from $K_{0} \curvearrowright F_{0}$, as in Definition \ref{Def: induced actions}.  
        \item A nonsingular isomorphism $\Delta_{0}: E/N \rightarrow F_{0}$ and a group isomorphism $\delta: G/N \rightarrow G_{0}$ such that $\Delta_{0}(gx) = \delta(g) \cdot \Delta_{0}(x)$ for all $g \in G/N$ and a.e. $x \in E$ and $\Delta(x) \in K \cdot \Delta_{0}(N \cdot x)$ for a.e. $x \in E$. 
    \end{enumerate}
\end{lemma}

Notice that in point 3 of Lemma \ref{Lemma: cocycle SR to OE SR}, the map $\Delta_{0}$ has domain $E/N$, which is a standard Borel space. This is because $N \actson E$ admits a fundamental domain and hence the associated equivalence relation is of type I and therefore the orbit space (denoted as the quotient) is a standard Borel space. For more details on type I equivalence relations and smoothness properties, we refer the reader to \cite[Chapter 2]{CalderoniLectureNotes09}. 

\begin{proposition}
\label{Prop: every orbit equivalence is a conjugacy}
    Let $\Gamma \curvearrowright (Z, \eta)= (X \times Y, \mu \times \lambda)$ be as in Definition \ref{Const: II infinity action} and let $\mathcal{R}$ be the orbit equivalence relation. Then for any orbit equivalence $\Delta \in \Aut(\mathcal{R})$, up to multiplication by an inner automorphism of $\mathcal{R}$, we have that $\Delta(g \cdot z) = g \cdot \Delta(z)$
    for a.e. $z \in Z$ and for all $g \in \Gamma$. 
    
\end{proposition}
\begin{proof}
    Let $\Delta \in \Aut(\mathcal{R})$ be an orbit equivalence for the action. By Lemma \ref{Lemma: action is cocycle superrigid}, the associated Zimmer 1-cocycle is cohomologous to a group homomorphism. Hence we can apply Lemma \ref{Lemma: cocycle SR to OE SR} to find a normal subgroup $N$, a subgroup $\Gamma_{0}$ and a $\Gamma_{0}$-invariant subset $Z_{0} \subset X \times Y$ such that $\Gamma \curvearrowright X \times Y$ is induced from $\Gamma_{0} \curvearrowright Z_{0}$. Now by Lemma \ref{Lemma: four fold diagonal action ergodic}, the action is doubly ergodic, and hence by \cite[Lemma 6.1]{PopaVaes11}, $\Gamma \curvearrowright X \times Y$ is not nontrivially induced, and we can assume $\Gamma_{0} = \Gamma$ and $Z_{0} = X \times Y$ up to measure zero. Since $N \curvearrowright X \times Y$ admits a fundamental domain, it is dissipative. By Lemma \ref{NoNontrivialDissipativeActions}, $N$ is either $\{I\}$ or $\{\pm I\}$. We show next that $N$ can never be $\{\pm I\}$. 

    Suppose $N = \{\pm I\}$ and then as in Lemma \ref{Lemma: cocycle SR to OE SR} there must be an isomorphism $\delta: \Gamma \rightarrow \Gamma/N$. Now it can be checked that the center of $\Gamma$ is $\{\pm I\}$. Indeed notice that any element in the center of $\Gamma$ must be in $\Sigma$ and must commute with $\SL(6,\mathbb{Z})$, however it is a known fact that the center of $\SL(6,\mathbb{Z})$ is $\{\pm I\}$ and hence the center of $\Gamma$ is $\{\pm 1\}$. However the center of $\Gamma/ N$ is then clearly trivial, hence giving us a contradiction. Hence $N = \{ I \}$ and after composing $\delta$ with an inner automorphism, we have by Lemma \ref{Lemma: automorphisms of amalgamated free products} that $\delta: \Gamma \rightarrow \Gamma$ is of the form $\delta_{1} *_{\Sigma} (\delta_{1}|_{\Sigma} \times \delta_{2})$ where $\delta_{1} \in \Aut(\Gamma_{1})$ and $\delta_{2} \in \Aut(\Lambda)$. By point 3 of Lemma \ref{Lemma: cocycle SR to OE SR}, we also have now that there is an orbit equivalence $\Delta_{0} \in \Aut(\mathcal{R})$ such that $\Delta_{0} = \phi \circ \Delta$ for some element $\phi \in [\mathcal{R}]$ with the property that $\Delta_{0}(g \cdot z) = \delta(g)\cdot \Delta_{0}(z)$ for a.e. $z \in Z$ and all $g \in \Gamma$. 

    Let us denote by $H_{1} < H$ the subgroup of index 2 generated by $g_{i}^{-1}A_{i}g_{i}$ for all $i \in \{1,...,k\}$ and $A_{0}$. Clearly $H_{1} \subset \ker(\pi)$ and since $\pi \circ \delta = \delta_{1} \circ \pi$, we also have that $\delta(H_{1}) \subset \ker(\pi)$. By Lemma \ref{infiniteindex}, given any $g \in \Gamma \backslash H$, we have that $K = gHg^{-1} \cap H$ has infinite index in $H$. Let $h_{1},h_{2},h_{3},...$ be a sequence of elements in $H$ such that $h_{i}K \neq h_{j}K$ for $i \neq j$. Then one can check that for the action of $H$ on $\Gamma/H - \{eH\}$ the cosets $h_{i}g H$ are all distinct, thus proving that $H$ acts on $\Gamma/H - \{H\}$ with infinite orbits. Since $H_{1}$ is an index 2 subgroup, it is also straightforward to check that the action of $H_{1}$ on $\Gamma/H - \{eH\}$ has infinite orbits. By \cite[Proposition 2.3]{PopaVaes08}, the generalized Bernoulli action $H_{1} \curvearrowright \Pi_{\Gamma/H - \{eH\}} (X_{0},\mu_{0})$ is ergodic. Consider now an $H_{1}$-invariant function $f \in L^{
    \infty}(X \times Y)$. Since $H_{1} \in \ker(\pi)$, we have that $f(hx,y) = f(x,y)$ for a.e. $(x,y) \in X \times Y$ and all $h \in H_{1}$. By taking $f_{y}(x) = f(x,y)$, we get by ergodicity that $f_{y}$ is a function on $X_{0}^{eH}$ by which we mean the copy of $X_{0}$ in the fiber over $eH$. To summarize we have:  
    \begin{equation}
    \label{eq aa}
        L^{\infty}(X \times Y)^{H_{1}} = L^{\infty}(X_{0}^{eH} \times Y)
    \end{equation}
    \textit{Claim 1: there exists $h \in \Gamma$ and a finite index subgroup $H_{0} < H_{1}$ such that $h \delta(H_{0})h^{-1} \subset H$.} Suppose that Claim 1 is false, then we will show that:
    \begin{equation}
    \label{eq ab}
    L^{\infty}(X \times Y)^{\delta(H_{1})} \subseteq 1 \otimes L^{\infty}(Y)
    \end{equation}
    Suppose Equation \ref{eq ab} does not hold, then we have a non constant function $f \in L^{\infty}(X)^{\delta(H_{1})}$. This means that $\delta(H_{1}) \actson X$ is not ergodic and by \cite[Proposition 2.3]{PopaVaes08}, the action $\delta(H_{1})$ acts on $\Gamma/H$ has at least one finite orbit. Hence there exists $gH \in \Gamma/H$ such that $\{kgH \; | \;  k \in \delta(H_{1})\}$ is a finite set. Let $K_{0}< \delta(H_{1})$ be the finite index subgroup $K_{0} = \{k \in \delta(H_{1}) \; | \; k \cdot gH = gH\}$. Taking $H_{0} = \delta^{-1}(K_{0})$, we have that $H_{0}$ is a finite index subgroup of $H_{1}$, and that $g\delta(H_{0})g^{-1} \subset H$, contradicting that 'Claim 1' is false. Hence assuming Claim 1, we have that Equation \ref{eq ab} is true. Now from Equations \ref{eq aa} and \ref{eq ab}, one can calculate: 
    \begin{align*} 
    \Delta_{0}(L^{\infty}(X_{0}^{eH} \times Y)) &= \Delta_{0}(L^{\infty}(X \times Y)^{H_{1}}) \\ = L^{\infty}(X \times Y)^{\delta(H_{1})} &\subseteq 1 \otimes L^{\infty}(Y)
    \end{align*}
    Since $1 \otimes L^{\infty}(Y)$ is globally $\Gamma$-invariant, it follows that: 
    \begin{equation}
    \Delta_{0}(L^{\infty}(X \times Y)) \subseteq 1 \otimes L^{\infty}(Y)
    \end{equation}
    which is absurd and thus we have proved Claim 1. Now by Lemma \ref{Lemma: delta2 is identity}, we have that $\delta$ is inner. Suppose $\delta(g) = kgk^{-1}$ for some $k \in \Gamma$. Then by replacing $\Delta$ by $k^{-1}\Delta$, we have that $\Delta(g \cdot z) = g \cdot \Delta(z)$ for all $g \in \Gamma$ and a.e. $z \in Z$
\end{proof}

Recall that an automorphism of an equivalence relation $\cR$ is called measure preserving if it preserves (a possibly infinite $\sigma$-finite) measure on the standard Borel space. Recall that every automorphism $\theta \in \Aut(\cR_{1})$ induces such a measure preserving automorphism of its Maharam extension.

\begin{proposition}
\label{Prop: every conjugacy is trivial}
    Let $\Gamma \curvearrowright (Z, \eta) = (X \times Y, \mu \times \lambda)$ be as in Definition \ref{Const: II infinity action}. Let $\Delta \in \Aut(\mathcal{R})$ be a measure preserving orbit equivalence satisfying $\Delta(g \cdot z) = g \cdot \Delta(z)$ for a.e. $z \in Z$ and for all $g \in \Gamma$. Then $\Delta = (\id \times \rho_{s})$ for some $s \in \mathbb{R} \backslash \{0\}$ where $\rho_{s}(y) = sy$.  
\end{proposition}

\begin{proof}
    We show that for an orbit equivalence $\Delta$ satisfying the hypotheses of the proposition, $\Delta = \id \times \rho_{s}$ in the following three steps. 
    
    \textit{Claim 1}: $\Delta(1 \otimes L^{\infty}(Y)) = 1 \otimes L^{\infty}(Y)$. 
    Since $\ker(\pi)$ acts on $\Gamma/H$ with infinite orbits, we have that the generalized Bernoulli action of $\ker(\pi)$ on $X$ is ergodic. Now let $1 \otimes f \in 1 \otimes L^{\infty}(Y)$ and consider the function $g = \Delta(1 \otimes f)$. We have that $g(hx,y) = g(x,y)$ for all $h \in \ker(\pi)$. For each $y \in Y$, consider the functions $g_{y}(x) = g(x,y)$. By ergodicity we have that $g_{y}$ is constant almost everywhere. Thus $g \in 1 \otimes L^{\infty}(Y)$.  
    
\textit{Claim 2}: After composing $\Delta$ with $(\id \times \rho_{s})$ for $s \in \mathbb{R} \backslash \{0\}$ if needed, we have that $\Delta(1 \otimes f) = 1 \otimes f$ 
for all $f \in L^{\infty}(Y)$. To check this, we first note that by Claim 1, the restriction of $\Delta$ to $L^{\infty}(Y)$ commutes with the action $\SL(6, \mathbb{Z}) \curvearrowright \mathbb{R}^{6}$. By \cite[Theorem D]{85852f12-3669-343b-b6ad-176dfe8edb82}, then for a.e. $y \in Y$, $\Delta(y) = g \cdot y$ for some $g \in \GL(6,\mathbb{R})$ that commutes with all elements in $\SL(6,\mathbb{Z})$. Thus $g$ is of the form $s \cdot I$ and $\Delta$ restricted to $L^{\infty}(Y)$ is of the form $\rho_{s}$ where $s \in \mathbb{R} \backslash \{0\}$ as required. 

\textit{Claim 3}: $\Delta = (\id \times \rho_{s})$ for some $s \in \mathbb{R} / \{0\}$. To check this, first let us write $\Delta(x,y) = (\Delta_{1}(x,y),\Delta_{2}(x,y))$. Suppose that as in Claim 2, $\widetilde{\Delta} = (\id \times \rho_{s^{-1}}) \circ \Delta$ restricted to $L^{\infty}(Y)$ is the identity. One can check as in Equation \ref{eq aa} from the proof of Proposition \ref{Prop: every orbit equivalence is a conjugacy} that $\widetilde{\Delta}(L^{\infty}(X_{0}^{eH} \times Y)) = L^{\infty}(X_{0}^{eH} \times Y)$. As a consequence we have that for a.e. $y \in Y$ the map given by $\widetilde{\Delta}_{y}(x) = \widetilde{\Delta}_{1}(x,y)$ is an automorphism of $L^{\infty}(X_{0},\mu_{0})$. Since $\widetilde{\Delta}$ is the product of a measure preserving and a measure scaling automorphism, $\widetilde{\Delta}$ must be measure scaling. Since $\widetilde{\Delta}|_{L^{\infty}(Y)} = \id|_{L^{\infty}(Y)}$, we must have that $\widetilde{\Delta}$ is measure preserving. Hence for a.e. $y \in Y$, we have that $\widetilde{\Delta}_{y}$ is measure preserving, and hence for a.e. $y \in Y$, $\widetilde{\Delta}_{y} = \id$ as $\mu_{0}(0) \neq \mu_{0}(1)$ thus proving the claim.
\end{proof}
To prove the uniqueness of Cartan subalgebras we use a general result for certain actions of amalgamated free product groups as in \cite{Vaes14}. As observed in \cite[Remark 8.3]{Vaes14}, the same proof as \cite[Theorem 8.1]{Vaes14} works for the following slightly more general result about uniqueness of Cartan subalgebras in this setting: 

\begin{theorem}[c.f. Theorem 8.1 and Remark 8.3 in \cite{Vaes14}]
\label{Thm: on uniqueness of Cartan from Vaes}
    Let $G = G_{1} *_{S} G_{2}$ be any amalgamated free product group and assume there exists $g_{1},...,g_{n} \in G$ such that $\cap_{i=1}^{n} g_{i}S g_{i}^{-1}$ is finite. Let $G \curvearrowright (W_{1}, \rho_{1})$ be any nonsingular free ergodic action with orbit equivalence relation $\cQ_{1}$. Let $G \curvearrowright (W, \rho)$ denote the infinite measure preserving Maharam extension with orbit equivalence relation $\cQ$. For $i = 1,2$, let $L_{i} < G_{i}$ be subgroups satisfying the following condition:
    \begin{itemize}
        \item [($\ast$)] For every Borel set $U \subset W$ with $0 <\rho(U) < \infty$, almost every $\cQ(L_{i} \curvearrowright W)|_{U}$ equivalence class consists of infinitely many $\cQ(S \cap L_{i} \curvearrowright W)|_{U}$ equivalence classes.
    \end{itemize} 
    Then $L^{\infty}(W_{1},\rho_{1})$ is the unique Cartan subalgebra up to unitary conjugacy in $L(\cQ_{1}) = L^{\infty}(W_{1},\rho_{1}) \rtimes G$.      
\end{theorem}
\begin{proof}
    Let $A = L^{\infty}(W,\rho)$ and let $\tau$ be the canonical semifinite trace on $M = A \rtimes G$. Consider a projection $p \in A$ such that $0< \tau(p) < \infty$. As in the proof of \cite[Theorem 8.1]{Vaes14}, for $i \in \{1,2\}$, it is enough to find unitaries $u_{i},v_{i} \in p(A \times G_{i})p$ that satisfies: 
    \begin{equation}
    \label{existence of unitaries}
        E_{p(A \rtimes S)p}(u_{i}) = E_{p(A \rtimes S)p}(v_{i}) = E_{p(A \rtimes S)p}(u_{i}^{*}v_{i}) = 0
    \end{equation}
    where $E_{B}$ for a subalgebra $B \subset M$ denotes the unique faithful normal conditional expectation onto $B$, which exists if $\tau|_{B}$ is still semifinite. As observed in \cite[Remark 8.3]{Vaes14}, by \cite[Lemma 2.6]{IKT09} we can find such unitaries $u_{i},v_{i}$ in $p(A \rtimes L_{i})p$ such that: 
    \begin{equation}
        E_{p(A \rtimes S \cap L_{i})p}(u_{i}) = E_{p(A \rtimes S \cap L_{i})p}(v_{i}) = E_{p(A \rtimes S \cap L_{i})p}(u_{i}^{*}v_{i}) = 0
    \end{equation}
    Now notice that $E_{p(A \rtimes S)p}(x) = E_{p(A \rtimes S \cap L_{i})p}(x)$ for all $x \in p(B \rtimes L_{i})p$ and hence the unitaries $u_{i},v_{i}$ satisfy equation \ref{existence of unitaries}, as required.
\end{proof}

Suppose as in Theorem \ref{Thm: on uniqueness of Cartan from Vaes}, we have subgroups $L_{i} < G_{i}$ such that the actions of $L_{i}$ on $W$ are recurrent and the actions of $S_{i} = L_{i} \cap S$ on $W$ are dissipative. Then for a fixed $i \in \{1,2\}$ the action $S_{i} \curvearrowright W$ admits a fundamental domain $U$. For any non-null Borel subset $U_{1} \subseteq gU$ for some $g \in S_{1}$, it is then clear that any $(L_{i} \curvearrowright W)|_{U_{1}}$ orbit contains infinitely many $(S_{i} \curvearrowright W)|_{U_{1}}$ orbits. Now writing any non-null Borel subset $E \subset W$ with finite measure as $E = \sqcup_{g \in S_{1}}gU \cap E$, we can see that each $(L_{i} \curvearrowright W)|_{E}$ orbit contains infinitely many $(S_{i} \curvearrowright W)|_{E}$ orbits, thus satisfying condition $(\ast)$. This gives the following corollary to Theorem \ref{Thm: on uniqueness of Cartan from Vaes}:

\begin{corollary}
\label{Corollary: on uniqueness of Cartans}
    Let $G = G_{1} *_{S} G_{2}$ be any amalgamated free product group and assume there exists $g_{1},...,g_{n} \in G$ such that $\cap_{i=1}^{n} g_{i}S g_{i}^{-1}$ is finite. Let $G \curvearrowright (W, \rho)$ be any nonsingular free ergodic action with Maharam extension $G \curvearrowright (\widetilde{W}, \widetilde{\rho})$. For $i \in \{1,2\}$, let $L_{i} < G_{i}$ be subgroups such that the actions $L_{i} \cap S \curvearrowright \widetilde{W}$ are dissipative and the actions $L_{i} \curvearrowright \widetilde{W}$ are recurrent. Then $L^{\infty}(W,\rho)$ is the unique Cartan subalgebra up to unitary conjugacy in $L^{\infty}(W,\rho)\rtimes G$.  
\end{corollary}

\begin{lemma}
\label{Lemma: III_1 equivalence relation satisfies condition ast}
    Let $\Gamma$ be as in Definition \ref{Const: Sigma, Lambda and Gamma} and $\Gamma \curvearrowright (Z_{1}, \eta_{1})$ as in Definition \ref{Const: III_1 action}. Then the Maharam extension $\Gamma \curvearrowright (Z, \eta)$ as in Definition \ref{Const: II infinity action} satisfies Condition $(\ast)$ of Theorem \ref{Thm: on uniqueness of Cartan from Vaes}.  
\end{lemma}
\begin{proof}
    Consider the subgroup $\Lambda_{1} \subset \Gamma_{1} = \SL(6,\mathbb{Z})$ given by a copy of $\SL(2,\mathbb{Z})$ in the first two rows and columns. Then we have that $\Sigma_{1} = \Lambda_{1} \cap \Sigma$ is given by:
    \begin{equation}
       \Sigma_{1} =  \left(\begin{array}{@{}c|c@{}}
  \begin{matrix}
      \epsilon_{1} & \mathbb{Z} \\ 0 &  \epsilon_{2}
  \end{matrix}
  & 0 \\
\hline
  0 &
  I
\end{array}\right) \text{ where } \epsilon_{1} = \epsilon_{2} = 1 \text{ or } -1
\end{equation}
It can be checked that the action of the top left $2 \times 2$ block on $\mathbb{R}^{2}$ is dissipative and hence the action of $\Sigma_{1} \curvearrowright Z$ is dissipative. Now we note that $\Lambda_{1} \curvearrowright \mathbb{R}^{2}$ is ergodic and look at the action on $Z$ as $\Lambda_{1} \curvearrowright (X \times \mathbb{R}^{4}) \times \mathbb{R}^{2}$. Choose a probability measure $\omega$ equivalent to the Lebesgue measure on $\mathbb{R}^{4}$ and note that the action $\Lambda_{1} \curvearrowright (X \times \mathbb{R}^{4}, \mu \times \omega)$ is still pmp. It follows from \cite[Theorem A.29]{AIM21} that the product of a recurrent action and a pmp action remains recurrent and hence the action $\Lambda_{1} \curvearrowright Z$ is recurrent. Now let $\Lambda_{2} = \SL(3,\mathbb{Z}) \subset \Gamma_{2}$ and note that $\Lambda_{2} \cap \Sigma = \{e\}$. Moreover $\Lambda_{2}$ is an infinite group with an essentially free pmp action on $Z$ and hence the action is recurrent. Now the result follows from Corollary \ref{Corollary: on uniqueness of Cartans}.  
\end{proof}

Now we have gathered all the ingredients to prove our main theorem. Let $\mathcal{R}_{1}$ be as in Definition \ref{Const: III_1 action} and let $M_{1} = L(\mathcal{R}_{1})$.  Recall that $\delta: \mathbb{R} \rightarrow \Out(M)$ denotes the canonical continuous homomorphism that does not depend on the choice of a faithful normal state.  

\begin{theorem}
\label{main theorem}
    Let $\mathcal{R}_{1}$ be as in Definition \ref{Const: III_1 action}. Then $M_{1} = L(\mathcal{R}_{1})$ is a full factor of type $\textrm{III}_{1}$. The modular homomorphism $\delta: \mathbb{R} \rightarrow \Out(M_{1})$ is an isomorphism and homeomorphism of Polish groups.
\end{theorem}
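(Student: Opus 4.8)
The plan is to assemble the structural results already established and reduce everything to two points: injectivity of $\delta$, which is standard, and triviality of $\Out(\mathcal{R}_1)$, which is the real content. By Lemma \ref{type III1 and maharam ergodic} the relation $\mathcal{R}_1$ is ergodic of type $\mathrm{III}_1$, so $M_1 = L(\mathcal{R}_1)$ is a type $\mathrm{III}_1$ factor, settling the first assertion. For the second, $\delta$ is injective because the modular automorphisms of a type $\mathrm{III}_1$ factor are outer (as recalled in the introduction), so only surjectivity is at stake. By Lemma \ref{III1 satisfies condition ast} together with Theorem \ref{uniqueCartan}, the Cartan subalgebra $A = L^\infty(Z_1,\eta_1)$ is unique up to unitary conjugacy, whence $\Out(M_1) = \Out(\mathcal{R}_1) \ltimes H^1(\mathcal{R}_1, S^1)$. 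By Proposition \ref{1cohomologygroup} the second factor equals $\mathbb{R}$, and inspecting that proof this copy of $\mathbb{R}$ consists precisely of the classes of the cocycles $(g,z) \mapsto \gamma(-\omega(g,z))$ with $\gamma\colon \mathbb{R} \to S^1$ a character and $\omega$ the logarithm of the Radon--Nikodym cocycle, i.e. exactly the image of the modular flow. Thus $\delta$ is surjective as soon as $\Out(\mathcal{R}_1)$ is trivial.

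To prove $\Out(\mathcal{R}_1) = 1$ I would pass to the Maharam extension. Any $\theta \in \Aut(\mathcal{R}_1)$ lifts, by functoriality of the Maharam construction, to an automorphism $\tilde{\theta} \in \Aut(\mathcal{R})$ commuting with the Maharam flow, and under this correspondence innerness is preserved and reflected: $\theta$ is inner on $\mathcal{R}_1$ iff $\tilde{\theta}$ is inner on $\mathcal{R}$. Applying Proposition \ref{outerautomorphisms=centralizer} and then Proposition \ref{centralizer=trivial} to $\tilde{\theta}$, there is an inner $\phi \in [\mathcal{R}]$ with $\phi \circ \tilde{\theta} = (\id \times \rho_\lambda)$ for some $\lambda \in \mathbb{R}\setminus\{0\}$. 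Since both $\tilde{\theta}$ and $(\id \times \rho_\lambda)$ commute with the Maharam flow (every scaling does), so does $\phi$; hence $\phi$ descends to an inner automorphism $\bar{\phi}$ of $\mathcal{R}_1$, and descending the whole equation gives $\bar{\phi} \circ \theta = (\id \times \bar{\rho}_\lambda)$ on $Z_1 = X \times (\mathbb{R}^6/\mathbb{R}^{*}_{+})$.

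It remains to show $(\id \times \bar{\rho}_\lambda)$ is inner on $\mathcal{R}_1$. When $\lambda > 0$ the induced map on $\mathbb{R}^6/\mathbb{R}^{*}_{+}$ is the identity, so there is nothing to prove; the case $\lambda < 0$, where it is $[y] \mapsto [-y]$, is the crux. The key observation is that $-I$ lies in $H$ (Definition \ref{definitionH}) and is central in $\Gamma$, so $(-I)gH = g(-I)H = gH$ for every coset; thus $-I$ fixes $\Gamma/H$ pointwise and acts trivially on the generalized Bernoulli factor $X$, while $\pi(-I) = -I$ acts on $\mathbb{R}^6/\mathbb{R}^{*}_{+}$ exactly by $[y] \mapsto [-y]$. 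Hence the single element $-I \in \Gamma$ implements $(\id \times \bar{\rho}_{-1})$, which is therefore inner. In both cases $\theta = \bar{\phi}^{-1} \circ (\id \times \bar{\rho}_\lambda)$ is a product of inner automorphisms, so $\Out(\mathcal{R}_1) = 1$. Combining with the previous paragraph, $\Out(M_1) = H^1(\mathcal{R}_1, S^1) = \mathbb{R}$ and this $\mathbb{R}$ is the image of $\delta$, so $\delta$ is an isomorphism.

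The main obstacle is precisely the $\lambda < 0$ case: a priori the reflection $[y] \mapsto [-y]$ on $\mathbb{R}^6/\mathbb{R}^{*}_{+}$ looks like a genuinely outer $\mathbb{Z}/2$-symmetry of $\mathcal{R}_1$, and it \emph{would} be outer if $-I$ acted nontrivially on $X$, since freeness of $\Gamma \curvearrowright X$ would then prevent any group element from simultaneously fixing the $X$-coordinate and reversing the $Y_1$-coordinate. What rescues the argument is the deliberate inclusion of the central element $-I$ in $H$, which makes $-I$ act trivially on $X$ and thereby collapses this potential $\mathbb{Z}/2$ to the identity. A secondary technical point to treat carefully is the Maharam-extension descent dictionary invoked in the second paragraph, namely that lifting an automorphism of $\mathcal{R}_1$ to a flow-commuting automorphism of $\mathcal{R}$ both preserves and reflects innerness; this is standard but should be stated explicitly.
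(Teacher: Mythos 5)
Your proposal is correct and takes essentially the same route as the paper's proof: type $\textrm{III}_1$ from Lemma \ref{type III1 and maharam ergodic}, the decomposition $\Out(M_{1}) = \Out(\mathcal{R}_{1}) \ltimes H^{1}(\mathcal{R}_{1},S^{1})$ via unique Cartan plus Proposition \ref{1cohomologygroup}, and triviality of $\Out(\mathcal{R}_{1})$ by lifting to the Maharam extension and applying Propositions \ref{outerautomorphisms=centralizer} and \ref{centralizer=trivial}. Your explicit observation that the reflection $[y] \mapsto [-y]$ is implemented by the central element $-I \in H$, which acts trivially on $X$, is precisely the justification left implicit in the paper's final assertion that $(\id \times \theta) \in [\mathcal{R}_{1}]$.
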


\begin{proof}
    By Lemma \ref{Lemma: type III1 and maharam ergodic}, we have that $M_{1}$ is a factor of type $\rm{III}_{1}$. We show first that every automorphism of $\cR_{1}$ is inner, i.e., $\Aut(\mathcal{R}_{1}) = [\cR_{1}]$. Indeed for any orbit equivalence $\Delta_{1} \in \Aut(\mathcal{R}_{1})$, we claim that $\Delta_{1}$ is inner. First note that $\Delta_{1}$ induces an infinite measure preserving orbit equivalence of the Maharam extension $\mathcal{R}$, let us call it $\Delta$. By Proposition \ref{Prop: every orbit equivalence is a conjugacy}, up to an inner automorphism $\phi$ of $\mathcal{R}$, we have that $\Delta(g \cdot z) = g \cdot \Delta(z)$ for all $g \in \Gamma$ and a.e. $z \in Z$. Now by Proposition \ref{Prop: every conjugacy is trivial}, we have that $\Delta = \phi \circ (\id \times \rho_{s})$ for some $s \in \mathbb{R} \backslash \{0\}$. Now, $\Delta$ restricted to $\mathcal{R}$ is equal to $\Delta_{1}$, and we have that $\Delta|_{\mathcal{R}_{1}} = \phi|_{\mathcal{R}_{1}} \circ (\id \times \rho_{s})|_{\mathcal{R}_{1}} = \Delta_{1}$. Note that  $\phi|_{\mathcal{R}_{1}} \in [\mathcal{R}_{1}]$, as for a.e. $z \in Z$ and $t \in \mathbb{R}$, we have that $((z,t),\phi(z,t)) \in \mathcal{R}$ and this implies $(z,\phi|_{\mathcal{R}_{1}}(z)) \in \mathcal{R}_{1}$. Let the restriction of $\rho_{s}: \mathbb{R}^{6} \rightarrow \mathbb{R}^{6}$ be denoted by $\theta: \mathbb{R}^{6}/\mathbb{R}^{*}_{+} \rightarrow \mathbb{R}^{6}/\mathbb{R}^{*}_{+}$. Note that if $s > 0$ then $\theta(y) = y$ for all $y \in \mathbb{R}^{6}/\mathbb{R}^{*}_{+}$ and if $s < 0$, then $\theta(y) = -y$ for all $y \in \mathbb{R}^{6}/\mathbb{R}^{*}_{+}$. In both cases, $(\id \times \theta) \in [\mathcal{R}_{1}]$ and this proves that $\Delta_{1} \in [\mathcal{R}_{1}]$. 
    
    Now, by Lemma \ref{Lemma: III_1 equivalence relation satisfies condition ast} and Theorem \ref{Thm: on uniqueness of Cartan from Vaes}, we have that $L^{\infty}(Z, \eta)$ is the unique Cartan subalgebra in $M_{1}$ up to unitary conjugacy. By Proposition \ref{Proposition: calculation of 1 cocycle group}, we have that $H^{1}(\mathcal{R}_{1},\mathbb{T})$ is equal to $\mathbb{R}$. Now fix a faithful normal state $\phi$ on $M_1$. Then $\sigma^{\phi}$ defines a continuous action of $\mathbb{R}$ on the Polish group $\cU(M_{1})$. Consider the semi-direct product $K = \cU(M_1) \rtimes_{\sigma^{\phi}} \mathbb{R}$ with its natural Polish group structure. Then the map 
    \begin{align*}
        \widetilde{\delta} : K \rightarrow \Aut(M_{1}) \text{ given by } (u,t) \mapsto \Ad(u) \circ \sigma^{\phi}_{t}
    \end{align*}
    is a continuous group homomorphism between Polish groups. Let $K_0$ be the kernel of $\widetilde{\delta}$, which is a closed normal subgroup of $K$. Since $M_1$ is of type III$_1$, we have that $K_0 = \mathbb{T} \cdot 1 \subset \cU(M_1)$. Thus $\widetilde{\delta}$ induces an injective and continuous group homomorphism $\widetilde{\delta}_{0} : K/K_{0} \rightarrow \Aut(M_1)$. We also have that $\widetilde{\delta}_{0}$ is surjective, by \cite[Theorem 3]{MR578730} and the fact that $\Aut(\cR_{1}) = [\cR_{1}]$. Hence $\widetilde{\delta}_{0}$ is a bijective continuous group homomorphism between Polish groups and by \cite[Lemma 3.4]{Connes74}, $\widetilde{\delta}_{0}$ is also a homeomorphism. Since $\cU(M_{1})/(\mathbb{T} \cdot 1)$ is closed in $K/K_{0}$, we have that $\Inn(M_1) = \widetilde{\delta}_{0}(\cU(M_1)/\mathbb{T} \cdot 1)$ is closed in $\Aut(M_1)$ and consequently $M_{1}$ is full. Denoting the canonical quotient by $\delta: \Aut(M_1) \rightarrow \Out(M_1)$, it follows that $\delta : \mathbb{R} \rightarrow \Out(M_{1})$ is an isomorphism and homeomorphism of Polish groups. 
\end{proof}

\printbibliography
\end{document}